\theoremstyle{plain}
   \newtheorem{theorem}{Theorem}[section]
   \newtheorem{lemma}[theorem]{Lemma}
   \newtheorem{corollary}[theorem]{Corollary}
\theoremstyle{definition}
   \newtheorem{definition}{Definition}[section]
   \newtheorem{question}{Question}[section]
   \newtheorem{example}{Example}[section] 
\theoremstyle{remark}
 \newtheorem{remark}{Remark}[section]
\newcommand{\R}{\mathbb{R}}
\newcommand{\Proj}{\mathbb{P}}
\newcommand{\Z}{\mathbb{Z}}
\def\newop#1{\expandafter\def\csname #1\endcsname{\mathop{\rm
#1}\nolimits}}
\keywords{Box polynomial, local $h^\ast$-polynomial, Eulerian polynomial, Ehrhart theory, simplex, weighted projective space, numeral systems, simplices for numeral systems, factoradics, real-rooted, unimodal, symmetric, log-concave}
\begin{document}

\title{Local $h^\ast$-Polynomials of Some Weighted Projective Spaces}

\author{Liam Solus}
\date{\today}
\address{Institutionen f\"or Matematik, KTH, SE-100 44 Stockholm, Sweden}
\email{solus@kth.se}

\begin{abstract}
There is currently a growing interest in understanding which lattice simplices have unimodal local $h^\ast$-polynomials (sometimes called box polynomials); specifically in light of their potential applications to unimodality questions for Ehrhart $h^\ast$-polynomials. 
In this note, we compute a general form for the local $h^\ast$-polynomial of a well-studied family of lattice simplices whose associated toric varieties are weighted projective spaces.  
We then apply this formula to prove that certain such lattice simplices, whose combinatorics are naturally encoded using common systems of numeration, all have real-rooted, and thus unimodal, local $h^\ast$-polynomials.  
As a consequence, we discover a new restricted Eulerian polynomial that is real-rooted, symmetric, and admits intriguing number theoretic properties.
\end{abstract}

%

\maketitle
\thispagestyle{empty}

\section{Introduction}
\label{sec: introduction}
For a positive integer $n\in\Z_{\geq0}$ we set $[n]:=\{1,\ldots,n\}$ and $[n]_0:=\{0,\ldots,n\}$.
Let $p(z) = p_0+p_1z+\cdots+p_nz^n$ be the generating polynomial for a combinatorial sequence $(p_0,\ldots,p_n)$; in particular, assume that $p_k$ is a nonnegative integer for all $k\in[n]_0$.  
A popular endeavour in combinatorics is to understand the distributional properties of the polynomial $p(z)$, and thus of the sequence $(p_0,\ldots,p_n)$.  
We say that $p(z)$ is {\bf unimodal} if $p_0\leq p_1\leq \cdots\leq p_t\geq \cdots\geq p_{n-1}\geq p_n$ for some $t\in[n]_0$. 
We say it is {\bf log-concave} if $p_i^2\geq p_{i-1}p_{i+1}$ for all $i\in[n-1]$, and we say it is {\bf symmetric} with respect to $m\in\Z_{\geq0}$ if $p_i = p_{m-i}$ for all $i\in[m]_0$.  
As stated, these properties seem mysteriously independent of the generating polynomial $p(z)$, and only really pertaining to the combinatorial sequence $(p_0,\ldots,p_n)$.  
However, the structure of the polynomial $p(z)$, and particularly it roots, can help us deduce these various distributional properties.  
We say that $p(z)$ is {\bf real-rooted} if all of its roots are real numbers.  
If $p(z)$ is real-rooted then it is also log-concave and unimodal \cite[Theorem 1.2.1]{B89}.  
Thus, it is particularly desirable if the generating polynomial $p(z)$ is both real-rooted and symmetric.  

In algebraic combinatorics, a well-studied family of generating polynomials are the (Ehrhart) $h^\ast$-polynomials of lattice polytopes.  
Let $P\subset\R^n$ be a $d$-dimensional convex lattice polytope; i.e.~a convex polytope whose affine span has dimension $d$ and all of whose vertices lie in $\Z^n$.  
For a nonnegative integer $t\in\Z_{\geq0}$, the {\bf $t^{th}$ dilate of $P$} is $tP:=\{tp\in\R^n \, : \, p\in P\}$, and the {\bf Ehrhart series of $P$} is
\[
\Ehr_P(z):=\sum_{t\geq0}|tP\cap\Z^n|z^t = \frac{h_0^\ast+h_1^\ast z+\cdots+h_d^\ast z^d}{(1-z)^{d+1}}.
\]
When written in its closed rational form, $\Ehr_P(z)$ appears as in the right-most expression above.
The polynomial in the numerator is called the {\bf (Ehrhart) $h^\ast$-polynomial} of $P$, and it is known to have only nonnegative integer coefficients \cite{S80}.
In recent decades, the distributional properties of $h^\ast$-polynomials have been investigated extensively (see for instance~\cite{B16}).  

In the case of lattice simplices, the $h^\ast$-polynomial has a closely related local-invariant known as its local $h^\ast$-polynomial \cite{KS16}, or box polynomial \cite{BR07,B16,SV13}.
Let 
\[
\Delta :=\conv(v^{(0)},\ldots,v^{(d)})\subset\R^n
\]
be a lattice $d$-simplex defined as the convex hull of the $d+1$ affinely independent points $v^{(0)},\ldots,v^{(d)}\in\Z^n$.  
The {\bf open parallelpiped} of $\Delta$ is 
\[
\Pi_\Delta^\circ :=
\left\{
\sum_{i=0}^d\lambda_i(v^{(i)},1)\in\R^{n+1}
\, : \,
0<\lambda_i<1, \,  i\in[d]_0
\right\},
\]
and the {\bf local $h^\ast$-polynomial}, or {\bf box polynomial}, of $\Delta$ is 
\[
\ell^\ast(\Delta;z) := \sum_{(x_1,\ldots,x_{n+1})\in\Pi_\Delta^\circ\cap\Z^n}z^{x_{n+1}}.
\]
We additionally define the {\bf half-open parallelpiped} of $\Delta$, denoted $\Pi_\Delta$, by replacing the inequalities $0<\lambda_i$ in the definition of $\Pi_\Delta^\circ$ with $0\leq\lambda_i$ for all $i\in[d]_0$.
As outlined in~\cite{B16} and~\cite{SV13} and applied in~\cite{GS18}, unimodality of local $h^\ast$-polynomials of lattice simplices can sometimes be used to recover unimodality of $h^\ast$-polynomials by way of a theorem shown in~\cite{BM85}.  
Moreover, as demonstrated in~\cite{GS18}, the distributional properties of local $h^\ast$-polynomials can also be used to answer questions on the distributional properties of a related family of generating polynomials; namely, the local $h$-polynomials of subdivisions of simplices \cite{A16}.
The applications of local $h^\ast$-polynomials to unimodality of $h^\ast$-polynomials prompted the following general question, posed in~\cite{B16}:
\begin{question}
\cite[Question 5]{B16}
\label{quest: box polynomials}
Which lattice simplices have unimodal local $h^\ast$-polynomials?
\end{question}
It can be seen from the definition of $\ell^\ast(\Delta;z)$ that local $h^\ast$-polynomials are always symmetric with respect to $d+1$.  
Thus, if we can further show $\ell^\ast(\Delta;z)$ is real-rooted, it will have all of the desirable distributional properties we previously outlined, including the unimodality requested by Question~\ref{quest: box polynomials}.  

One well-studied family of simplices for which we may consider Question~\ref{quest: box polynomials} are those whose associated toric varieties are {\bf weighted projective spaces} \cite{C02}.
For convenience we may refer to a simplex in this family simply as a weighted projective space.
Recently, the distributional properties of the $h^\ast$-polynomials of a particularly nice family of weighted projective spaces, denoted $\mathcal{Q}$, have been intensely investigated \cite{BD16,BDS16,BL18,LS18,S17}.  
Within the family $\mathcal{Q}$ live the {\bf simplices for numeral systems}, examples of which were shown in~\cite{S17} to have real-rooted $h^\ast$-polynomials.   
In this note, we first state a general formula for the local $h^\ast$-polynomials for each simplex in $\mathcal{Q}$.  
We then use this formula to show that all simplices for numeral systems studied in~\cite{S17} also have real-rooted local $h^\ast$-polynomials, thereby answering Question~\ref{quest: box polynomials} for this family of weighted projective spaces.

In Section~\ref{sec: some weighted projective spaces}, we recall the definition of the weighted projective spaces $\mathcal{Q}$ and compute a general formula for their associated local $h^\ast$-polynomials.
We additionally recall some necessary preliminaries on numeral systems \cite{F85} and real-rooted polynomials in subsections~\ref{subsec: numeral systems and weighted projective spaces} and~\ref{subsec: real-rootedness}, respectively.  
In Section~\ref{sec: the factoradic simplex}, we prove that the local $h^\ast$-polynomial of the weighted projective spaces associated to the factoradic numeral system are all real-rooted, and thus unimodal.
In doing so, we discover a previously unknown restricted version of the Eulerian polynomial that is both symmetric and real-rooted.    
In Section~\ref{sec: the base-r simplex}, we prove that the local $h^\ast$-polynomials of the base-$r$-simplices for numeral systems are all real-rooted as well.  
In Section~\ref{sec: final remarks}, we then outline some future directions motivated by this work pertaining to $\gamma$-nonnegativity and local $h$-polynomials of subdivisions of simplicial complexes.

\section{Some Weighted Projective Spaces}
\label{sec: some weighted projective spaces}

Let $e^{(1)},\ldots,e^{(n)}$ denote the standard basis vectors in $\R^n$.
Given a weakly increasing vector of positive integers $q :=(q_1,\ldots,q_n)$, define an $n$-simplex $\Delta_{(1,q)}$ as the convex hull
\[
\Delta_{(1,q)} :=\conv\left(e^{(1)},\ldots,e^{(n)},-\sum_{i=1}^nq_ie^{(i)}\right)\subset\R^n.
\]
The projective toric variety associated to $\Delta_{(1,q)}$ is the weighted projective space in which the weight $q_i$ is assigned to the $i^{th}$ coordinate \cite{C02}.  
Recently, the simplices $\Delta_{(1,q)}$ have been considered for their combinatorial properties, particularly in regards to the unimodality of their $h^\ast$-polynomials \cite{BD16,BDS16,BL18,LS18,S17}.  
Thus, in light of Question~\ref{quest: box polynomials}, it is natural to ask which $\Delta_{(1,q)}$ have unimodal local $h^\ast$-polynomials.  
To this end, we first determine a general formula for the local $h^\ast$-polynomial of $\Delta_{(1,q)}$.  

\begin{theorem}
\label{thm: box polynomial of q}
Let $q := (q_1,\ldots,q_n)$ be a sequence of positive integers, set $Q := 1+q_1+\cdots+q_n$, and define the set 
\[
T_q := 
\left\{
b\in[Q-1]
\, : \,
Q \nmid\,q_ib \mbox{ for all $i\in[n]$}
\right\}.
\]
Then the local $h^\ast$-polynomial of the $n$-simplex $\Delta_{(1,q)}$ is 
\[
\ell^\ast(\Delta_{(1,q)};z) = \sum_{b\in T_q}z^{\omega(b)},
\qquad
\mbox{where}
\qquad
\omega(b) = b - \sum_{i=1}^n\left\lfloor\frac{q_ib}{Q}\right\rfloor.
\]
\end{theorem}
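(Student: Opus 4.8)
The plan is to compute the local $h^\ast$-polynomial directly from the definition by parametrizing the lattice points of the open parallelepiped $\Pi_{\Delta_{(1,q)}}^\circ$. Write the generators of the cone as $(e^{(i)},1)$ for $i\in[n]$ and $(-q_1,\dots,-q_n,1)$; a point of $\Pi_{\Delta_{(1,q)}}^\circ$ has the form $\sum_{i=0}^n\lambda_i(v^{(i)},1)$ with all $\lambda_i\in(0,1)$. The key arithmetic fact is that the fundamental parallelepiped of this simplex is governed by the single linear relation $\sum_{i=1}^n(e^{(i)},1)\cdot q_i + (\text{last generator})\cdot 1 = (0,\dots,0,Q)$, so that every lattice point of $\Pi_{\Delta_{(1,q)}}$ is obtained from an integer $b\in[Q-1]_0$ by taking $\lambda_0 = \{q_0 b/Q\}$-type fractional parts; concretely, the lattice point indexed by $b$ is $\sum_{i=1}^n\{-q_i b/Q\}\,e^{(i)}$ together with last coordinate equal to $b - \sum_{i=1}^n\lfloor q_i b/Q\rfloor =: \omega(b)$. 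This is the standard description of $\Pi_\Delta\cap\Z^{n+1}$ for simplices whose normalized volume is $Q$, and I would cite or reproduce the short computation: the sublattice index is exactly $Q$, the relation above exhibits the cyclic quotient group $\Z/Q\Z$, and the representative of the class of $b$ with all coordinates in $[0,1)$ has $i$-th coordinate $\langle -q_i b/Q\rangle$.

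Next I would identify which of these $Q$ lattice points lie in the \emph{open} parallelepiped $\Pi_{\Delta_{(1,q)}}^\circ$ rather than merely in $\Pi_{\Delta_{(1,q)}}$. The point indexed by $b$ has barycentric coordinate $\lambda_i$ (for $i\in[n]$) equal to the fractional part $\{-q_i b/Q\}$ after reindexing, and $\lambda_0$ equal to $\{b/Q\}$; membership in the open box requires every one of these $n+1$ fractional parts to be strictly positive, i.e.\ no coordinate may be an integer. The coordinate $\lambda_0 = \{b/Q\}$ is nonzero precisely when $b\not\equiv 0\pmod Q$, which excludes only $b=0$; and the coordinate attached to weight $q_i$ is nonzero precisely when $Q\nmid q_i b$. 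Taking the conjunction over all $i\in[n]$ and intersecting with $b\in[Q-1]$ yields exactly the index set $T_q$. Therefore the lattice points of $\Pi_{\Delta_{(1,q)}}^\circ$ are in bijection with $T_q$, and the one indexed by $b$ contributes $z^{\omega(b)}$ to $\ell^\ast$, giving the claimed formula $\ell^\ast(\Delta_{(1,q)};z)=\sum_{b\in T_q}z^{\omega(b)}$.

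The main obstacle is the bookkeeping in the first step: one must verify carefully that the map $b\mapsto \big(\{-q_i b/Q\}\big)_{i}$ is a genuine bijection from $\Z/Q\Z$ onto $\Pi_{\Delta_{(1,q)}}\cap\Z^{n+1}$, that the last coordinate of the representative really equals $\omega(b) = b - \sum_i\lfloor q_i b/Q\rfloor$ (this follows from $\sum_i \{-q_i b/Q\}\cdot 1 + \{b/Q\}$ summing correctly against the height-$1$ generators, using $-q_i b/Q = \lfloor -q_i b/Q\rfloor + \{-q_i b/Q\}$ and reconciling $\lfloor -q_i b/Q\rfloor$ with $-\lceil q_i b/Q\rceil$), and that the symmetry $\omega(b)+\omega(Q-b)=n+1$ is consistent with the known symmetry of $\ell^\ast$ about $d+1=n+1$. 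A secondary subtlety is edge cases where some $q_i$ shares a factor with $Q$: then $Q\mid q_i b$ can occur for $b\neq 0$, correctly removing those $b$ from $T_q$ and accounting for why $\deg \ell^\ast$ can be strictly less than $n+1$ and why $\ell^\ast$ may vanish entirely when $\Delta_{(1,q)}$ is not "sufficiently generic." Once the parametrization is nailed down, the rest is immediate from the definitions of $\Pi_\Delta^\circ$ and $\ell^\ast(\Delta;z)$.
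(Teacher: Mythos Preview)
Your approach is essentially the paper's own: parametrize the lattice points of $\Pi_{\Delta_{(1,q)}}$ by $b\in[Q-1]_0$ with barycentric coordinates given by fractional parts, observe that membership in the \emph{open} parallelepiped is exactly the condition $Q\nmid q_ib$ for all $i\in[n]$ (together with $b\neq 0$), and read off the last coordinate as $\omega(b)$. The only slip is a sign: with $\lambda_0=b/Q$ one obtains $\lambda_i=\{q_ib/Q\}$ rather than $\{-q_ib/Q\}$, since integrality of the $i$-th spatial coordinate $\lambda_i-q_i\lambda_0$ forces $\lambda_i\equiv q_i\lambda_0\pmod 1$; this does not affect your divisibility criterion or the height formula $\sum_i\lambda_i=\omega(b)$, and you rightly flagged this bookkeeping as the place needing care.
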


\begin{proof}
In the following we let $\Pi_q:=\Pi_{\Delta_{(1,q)}}$ denote the half-open parallelpiped of $\Delta_{(1,q)}$, and we let $\Pi_q^\circ:=\Pi_{\Delta_{(1,q)}}^\circ$ denote the associated open parallelpiped.  
To prove the desired result, we take an analogous approach to the proof of Theorem 2.2 in~\cite{BDS16}, in which the $h^\ast$-polynomial of $\Delta_{(1,q)}$ is computed by way of the heights of the lattice points in $\Pi_q$.  
In this fashion, we let $(v^{(1)},1)$, $(v^{(2)},1)$, \ldots, $(v^{(n)},1)$, $(v^{(0)},1)$ denote the columns of the matrix
\[
\begin{pmatrix}
1		&	1		&	1		&	\cdots	&	1		&	1		\\
1		&	0		&	0		&	\cdots	&	0		&	-q_1		\\
0		&	1		&	0		&	\cdots	&	0		&	-q_2		\\
0		&	0		&	1		&	\cdots	&	0		&	-q_3		\\
\vdots	&	\vdots	&	\vdots	&	\ddots	&	\vdots	&	\vdots	\\
0		&	0		&	0		&	\cdots	&	1		&	-q_n		\\
\end{pmatrix}
\]
as read from left-to-right.  
Then every lattice point in the open parallelpiped $\Pi_q^\circ$ is of the form 
\[
p = \sum_{i=0}^n\lambda_i(v^{(i)},1),
\]
where $0<\lambda_i <1$ for all $0\leq i \leq n$.  
Moreover, by Proposition 4.4 in~\cite{N07} and Cramer's Rule we know that
$
\lambda_i = \frac{b_i}{Q},
$
for some $0\leq \lambda_i <Q$.  
However, since we are assuming $p\in\Z^{n+1}$, then for $1\leq i\leq n$ it must be that 
$
\lambda_i = q_i\lambda_0-\left\lfloor q_i\lambda_0\right\rfloor.  
$
So it follows that $p\in\Pi_q^\circ$ if and only if $0<\lambda_0<1$ and 
$
0<q_i\lambda_0-\left\lfloor q_i\lambda_0\right\rfloor<1
$
for all $1\leq i \leq n$.  
Since $\lambda_0 = \frac{b}{Q}$ for some $0\leq b<Q$, this is equivalently stated as $p\in\Pi_q^\circ$ if and only if $Q\nmid\,q_ib$ for all $i\in[n]$ and $0<b<Q$.  
Since, for each $0<b<Q$, the height of the corresponding lattice point in $\Pi_q^\circ$ is given by
\[
\sum_{i=0}^n\lambda_i = b-\sum_{i=1}^n\left\lfloor\frac{q_ib}{Q}\right\rfloor,
\]
the proof is complete.
\end{proof}

As a first example, we can use Theorem~\ref{thm: box polynomial of q} to compute the local $h^\ast$-polynomial for the $n$-simplex whose associated projective toric variety is $\Proj^n$; i.e.~$n$-dimensional projective space.  
\begin{corollary}
\label{cor: projective space}
Let $\Delta_{\left(1,q^{(n)}\right)}$ be the $n$-simplex with $q^{(n)} := (1,1,\ldots,1)\in\R^n$.  
Then the local $h^\ast$-polynomial of $\Delta_{\left(1,q^{(n)}\right)}$ is
\[
\ell^\ast(\Delta_{\left(1,q^{(n)}\right)};z) = z+z^2+\cdots+z^{n} = zh^\ast(\Delta_{\left(1,q^{(n-1)}\right)};z).
\]
\end{corollary}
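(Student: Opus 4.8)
The plan is to apply Theorem~\ref{thm: box polynomial of q} directly with $q = q^{(n)} = (1,1,\ldots,1)$, so that $Q = 1 + n$, and then identify the set $T_{q^{(n)}}$ and the height function $\omega$ explicitly. First I would observe that since every $q_i = 1$, the divisibility condition ``$Q \nmid q_i b$ for all $i \in [n]$'' collapses to the single condition $Q \nmid b$, i.e. $(n+1) \nmid b$. But $b$ ranges over $[Q-1] = \{1, 2, \ldots, n\}$, none of whose elements is divisible by $n+1$, so $T_{q^{(n)}} = \{1, 2, \ldots, n\}$ in its entirety. This is the crux of why projective space is the ``extremal'' case: every lattice point of the half-open box except the origin already lies in the open box.

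Next I would compute $\omega(b)$ for $b \in \{1, \ldots, n\}$. Here $\omega(b) = b - \sum_{i=1}^n \lfloor q_i b / Q \rfloor = b - n\lfloor b/(n+1) \rfloor$. Since $1 \le b \le n < n+1$, we have $\lfloor b/(n+1)\rfloor = 0$, hence $\omega(b) = b$. Therefore $\ell^\ast(\Delta_{(1,q^{(n)})}; z) = \sum_{b=1}^n z^b = z + z^2 + \cdots + z^n$, which is the asserted first equality.

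For the second equality, I would simply note that the same computation applied to $q^{(n-1)} = (1,\ldots,1) \in \R^{n-1}$ gives $\ell^\ast(\Delta_{(1,q^{(n-1)})}; z) = z + z^2 + \cdots + z^{n-1}$, so multiplying by $z$ yields $z^2 + z^3 + \cdots + z^n$. Comparing with the first equality, the difference between $\ell^\ast(\Delta_{(1,q^{(n)})}; z)$ and $z\,\ell^\ast(\Delta_{(1,q^{(n-1)})}; z)$ is just the single term $z$, which matches after a small reindex --- wait, in fact $z + z^2 + \cdots + z^n = z(1 + z + \cdots + z^{n-1})$, and $z\,\ell^\ast(\Delta_{(1,q^{(n-1)})};z) = z(z + \cdots + z^{n-1})$, so these are not literally equal; I should double-check the intended indexing convention in the statement and present the identity in whichever normalization the authors use (most likely $\ell^\ast$ of the point-simplex is taken to be $1$, so that $\ell^\ast(\Delta_{(1,q^{(0)})};z) = 1$ and the recursion reads $\ell^\ast(\Delta_{(1,q^{(n)})};z) = z + z\,\ell^\ast(\Delta_{(1,q^{(n-1)})};z)$, or the $q^{(n-1)}$ appearing in the displayed corollary is a typo-adjacent shorthand).

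The only genuine obstacle here is bookkeeping with the off-by-one in dimension versus the number of weights; there is no analytic or combinatorial difficulty, since everything reduces to the trivial observation $\lfloor b/(n+1)\rfloor = 0$ for $b \le n$. I expect the write-up to be three or four lines once the indexing convention is pinned down.
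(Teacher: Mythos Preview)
Your argument for the first equality is exactly the paper's: apply Theorem~\ref{thm: box polynomial of q} with $Q=n+1$, observe that $T_{q^{(n)}}=[n]$ since no $b\in[n]$ is divisible by $n+1$, and check $\omega(b)=b$ because $\lfloor b/(n+1)\rfloor=0$.

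Your confusion about the second equality comes from a misreading of the statement. The right-hand side is $z\,h^\ast(\Delta_{(1,q^{(n-1)})};z)$, the Ehrhart $h^\ast$-polynomial of the $(n-1)$-simplex, \emph{not} $z\,\ell^\ast(\Delta_{(1,q^{(n-1)})};z)$. It is classical (and the paper simply cites it as ``well-known'') that $h^\ast(\Delta_{(1,q^{(n-1)})};z)=1+z+\cdots+z^{n-1}$, so multiplying by $z$ gives $z+z^2+\cdots+z^n$ on the nose. There is no off-by-one issue and no special convention to pin down; once you read $h^\ast$ instead of $\ell^\ast$, the identity is immediate.
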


\begin{proof}
Since $Q = n+1$ and $q^{(n)}_i = 1$ for all $1\leq i\leq n$ for $\Delta_{\left(1,q^{(n)}\right)}$, then $b\in[Q-1]$ is in the set $T_{q^{(n)}}$ if and only if $n+1$ divides $b$.  
Therefore, $T_{q^{(n)}} = [Q-1]$, and the first inequality follows by checking that $\omega(b) = b$ for all $b\in[Q-1]$.  
The second inequality follows from well-known results.  
\end{proof}
In a recent paper \cite{GS18}, the authors showed that the local $h^\ast$-polynomials of the {\bf $s$-lecture hall simplices} are all real-rooted, and therefore affirmatively answered Question~\ref{quest: box polynomials} for this family of simplices.  
They additionally showed that they contain the local $h$-polynomials for some well-studied subdivisions of simplices which are closely related to derangement polynomials.
The $h^\ast$-polynomials of the $s$-lecture hall simplices, called the {\bf $s$-Eulerian polynomials}, are also real-rooted and unimodal \cite{SV15}.
The results of~\cite{GS18} then suggest that these nice properties of the $h^\ast$-polynomial of a simplex can be inherited by its local $h^\ast$-polynomial. 
In~\cite{S17} the author indexed families of $\Delta_{(1,q)}$ via the place-values of positional numeral systems and used this stratification to deduce numeral systems for which the associated $h^\ast$-polynomials $h^\ast(\Delta_{(1,q)};z)$ were all real-rooted.  
In the following, we will use Theorem~\ref{thm: box polynomial of q} to show that these properties of the $h^\ast$-polynomial are again inherited by the associated local $h^\ast$-polynomials.  
We first recall the necessary details from~\cite{S17} in subsections~\ref{subsec: numeral systems and weighted projective spaces} and~\ref{subsec: real-rootedness}.

\subsection{Numeral Systems and Weighted Projective Spaces}
\label{subsec: numeral systems and weighted projective spaces}
The {\bf normalized volume} of a lattice polytope is the value given by evaluating its $h^\ast$-polynomial at $1$.  
In this paper, the normalized volume of the simplex $\Delta_{(1,q)}$ is denoted $Q$.  
When studying properties of lattice polytopes it can often be useful to stratify the collection of all lattice polytopes by their normalized volume rather than their dimension.  
In special cases, this can even allow us to discover recursive formulae that can be used to prove desirable properties of the associated $h^\ast$-polynomials, such as real-rootedness.  
To do exactly this, in~\cite{S17} the author studied when one simplex $\Delta_{(1,q)}$ in each dimension $n\geq1$ with normalized volume $Q_n$ can be found so that the sequence $(Q_n)_{n=0}^\infty$ are the place values of some (positional) numeral system.

A (positional) numeral system is a method for expressing numbers.
A {\bf numeral} is a string of nonnegative integers $\eta = \eta_{n-1}\eta_{n-2}\cdots\eta_0$ and the location of $\eta_i$ is called the {\bf place} of $i$.  
The {\bf digits} are the numbers allowable in the place $i$ and the $i^{th}$ {\bf base} is the number of digits for place $i$.  
A {\bf (positional) numeral system} is a sequence of positive integers $(a_n)_{n=0}^\infty$ satisfying $a_0 = 1<a_1<a_2<\cdots$, and the term $a_n$ is called its {\bf $n^{th}$ place value}.  
Any nonnegative integer $b$ can be expressed uniquely as a numeral with respect to the numeral system $(a_n)_{n=0}^\infty$ by repeatedly performing Euclidean division starting with the largest $a_{n-1}$ such that $b>a_{n-1}$ and repeating this process with the remainder (see for instance Theorem~1 in~\cite{F85}).  
Specifically, if $b = p_{n-1}a_{n-1}+r_{n-1}$ and $r_{n-1} = p_{n-2}a_{n-2}+r_{n-2}$ then $p_{n-1}$ is the digit in the $(n-1)^{st}$ place and $p_{n-2}$ is the digit in the $(n-2)^{nd}$ place in the numeral representing $b$, and so on.
\begin{example}
\label{ex: two numeral systems}
The following numeral systems will be of special interest in the coming sections.
\begin{enumerate}
	\item (The Binary Numeral System). Let $(a_n)_{n=0}^\infty := (2^n)_{n=0}^\infty$.  Then the $n^{th}$ place value is $2^n$ and the possible digits in each place are only $0$ or $1$.  The number $13$ is represented by the numeral $1101$ since
	\[
	13 = 1\cdot2^3+1\cdot2^2+0\cdot2^1+1\cdot2^0.
	\]
	\item (The Base-$r$ Numeral System).  Generalizing the previous example, let $r\geq 2$ and $(a_n)_{n=0}^\infty := (r^n)_{n=0}^\infty$.  Then the $n^{th}$ place value is $r^n$ and the possible digits in each place are $[r-1]_0$.  For example, when $r=10$, we get the typical base-$10$ numeral system which we all use every day.  
	\item (The Factoradic Numeral System). Let  $(a_n)_{n=0}^\infty := ((n+1)!)_{n=0}^\infty$.  Then the $n^{th}$ place value is $(n+1)!$ and the possible digits in each place are only $[n]_0$.  This numeral system is used in computer science to efficiently encode permutations in a lexicographic order, and it will be the focus of Section~\ref{sec: the factoradic simplex}.  
\end{enumerate}
\end{example}

Thinking geometrically, given a numeral system $(a_n)_{n=0}^\infty$ we can ask for a sequence of $\Delta_{(1,q)}$, one in each dimension $n\geq1$, with normalized volume $Q_n$ such that $Q_n = a_n$ for all $n\geq 1$.  
As demonstrated in \cite{S17}, if $\Delta_{(1,q)}$ are all well-chosen, then the $h^\ast$-polynomials of these simplices can be nicely expressed in terms of the combinatorics of the representations of integers in the numeral system $(a_n)_{n=0}^\infty$.  
Even more, these expressions can help us recover real-rootedness of the $h^\ast$-polynomials by revealing recursions that preserve interlacing of polynomials.
We now summarize the theory we will need on real-rootedness and interlacing polynomials in the next subsection.

\subsection{Real-rooted and Interlacing Polynomials}
\label{subsec: real-rootedness}
Let $p(z) = p_0+p_1z+\cdots+p_dz^d\in\R[z]$ be a real-rooted polynomial with degree $d$, denoted $\deg(p) = d$, and let $\alpha_1\geq \alpha_2\geq\cdots\geq \alpha_d$ denote its roots.  
If $q(z) = q_0+q_1z+\cdots+q_nz^n\in\R[z]$ is a second real-rooted polynomial with $\deg(q) = n$ and roots $\beta_1\geq \beta_2\geq\cdots\geq \beta_n$.  
We say that $q$ {\bf interlaces} $p$, denoted $q\preceq p$, if the roots of $p$ and $q$ can be ordered such that
\[
\alpha_1\geq\beta_1\geq \alpha_2\geq \beta_2\geq \alpha_3\geq \beta_3\geq\cdots.
\]
A sequence of polynomials $(f_i)_{i=0}^{m}$ with $f_i\in\R[z]$ of polynomials is called {\bf interlacing} if $f_i\preceq f_j$ for all $0\leq i\leq j\leq m$.  
Let $\mathcal{F}_m^+$ denote the space of all interlacing sequences $(f_i)_{i=0}^{m}$  for which $f_i$ has only nonnegative coefficients for all $0\leq i\leq m$.  
In~\cite{B15}, the author gave a complete characterization of matrices $(G_{i,j}(z))_{i,j=1}^{m,n}$ that map $\mathcal{F}_n^+$ to $\mathcal{F}_m^+$.  
These matrices are said to {\bf preserve interlacing}, and by iteratively applying the same matrix we produce recursions that preserve interlacing. 
In the coming sections, we will use two recursions that preserve interlacing.
\begin{lemma}
\cite[Corollary~8.7]{B15}
\label{lem: inversion sequence recursion}
Let $(f_i)_{i=0}^{n}$ be an interlacing sequence of polynomials.  
For $m\in\Z_{\geq0}$ and $\varphi:[m]_0\longrightarrow\Z_{\geq0}$ satisfying $\varphi(i)\leq\varphi(i+1)$ for all $i\in[m-1]_0$, define the polynomials 
\[
g_i:=z\sum_{j<\varphi(i)}f_i+\sum_{\varphi(i)\geq j}f_i.
\]
Then the sequence of polynomials $(g_i)_{i=0}^m$ is interlacing.
\end{lemma}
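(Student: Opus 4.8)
The statement above is precisely Corollary~8.7 of~\cite{B15}, so the quickest proof is to cite it; here is the plan behind that citation. Since $g_i$ depends only on $\min(\varphi(i),n+1)$, we may assume $\varphi$ takes values in $[n+1]_0$. Then $(f_0,\dots,f_n)\mapsto(g_0,\dots,g_m)$ is the linear map given by the polynomial matrix $G(z)=\bigl(G_{i,j}(z)\bigr)_{i\in[m]_0,\,j\in[n]_0}$ with $G_{i,j}(z)=z$ for $j<\varphi(i)$ and $G_{i,j}(z)=1$ for $j\ge\varphi(i)$. Because $\varphi$ is weakly increasing, each row of $G(z)$ has the shape $(z,\dots,z,1,\dots,1)$ and the number of leading $z$'s is nondecreasing down the rows; up to repeating rows (harmless, since $p\preceq p$ for every real-rooted $p$) this is the ``staircase'' matrix whose $k$-th row, $k\in[n+1]_0$, has exactly $k$ leading $z$'s, and it suffices to show this staircase matrix carries $\mathcal{F}_n^+$ into $\mathcal{F}_{n+1}^+$.

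For this I would appeal to Brändén's characterization~\cite{B15} of interlacing-preserving polynomial matrices and verify its hypotheses for the staircase matrix: that every entry lies in $\R_{\ge0}[z]$, which is clear, and the accompanying condition relating the $2\times2$ submatrices. The helpful point is that, because the blocks of $z$'s are nested, every $2\times2$ submatrix has its two rows of the form $(z,\dots,z,1,\dots,1)$ with the lower row at least as rich in $z$'s as the upper one; this leaves only a short list of submatrix shapes to inspect, each of which one checks directly against Brändén's condition, while any shape with the upper row richer in $z$'s --- such as $\left(\begin{smallmatrix}z&1\\1&1\end{smallmatrix}\right)$, which the condition would forbid --- is ruled out by the nesting.

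The step I expect to be the main obstacle is exactly this verification, since Brändén's condition is a joint requirement on all four entries of a $2\times2$ submatrix (phrased via a two-variable polynomial) rather than a simple interlacing relation between two products of entries. If one prefers to bypass~\cite{B15}, an alternative is a direct induction on $n$: writing $S_k:=f_0+\dots+f_{k-1}$, $T_k:=f_k+\dots+f_n$, one has $g_i=zS_{\varphi(i)}+T_{\varphi(i)}$, so it suffices to prove that $h_k:=zS_k+T_k$, $k\in[n+1]_0$, is interlacing; peeling off $f_0$ gives $h_0=f_0+h_0'$ and $h_k=zf_0+h_{k-1}'$ for $k\ge1$, where $(h_0',\dots,h_n')$ is the analogous sequence for $(f_1,\dots,f_n)$ and is interlacing by induction, and one propagates interlacing using $p\preceq zp$, the ``sandwich'' fact that $p\preceq ap+bq\preceq q$ for $a,b\ge0$ whenever $p\preceq q$, the fan lemmas for sums of polynomials with a common interlacer, and $f_0\preceq f_j$ for all $j$. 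The delicate part of this route is the degree bookkeeping --- whether $\deg h_{k+1}$ equals $\deg h_k$ or $\deg h_k+1$, and hence which way each fan lemma must be applied --- which is why citing~\cite{B15} is the cleaner path.
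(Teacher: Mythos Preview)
Your proposal is correct and matches the paper: the paper gives no proof at all for this lemma, it simply cites \cite[Corollary~8.7]{B15}, which is exactly what you do in your first sentence. The sketch you add afterward (the staircase-matrix verification via Br\"and\'en's characterization, and the alternative peel-off induction) is extra content not present in the paper, but it is reasonable and does not conflict with anything there.
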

A closely related, but distinct, recursion that also preserves interlacing is the following:
\begin{lemma}
\cite[Lemma~4.4]{S17}
\label{lem: extended inversion sequence recursion}
Let $(f_i)_{i=0}^{n}$ be an interlacing sequence of polynomials.  
For $i\in[n]_0$ define the polynomial
\[
g_i:=z\sum_{j\leq \varphi(i)}f_i+\sum_{\varphi(i)\geq j}f_i
\]
Then the sequence of polynomials $(g_i)_{i=0}^n$ is interlacing.
\end{lemma}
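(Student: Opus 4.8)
To prove this the way Lemma~\ref{lem: inversion sequence recursion} is proved, the plan is to present the transformation $(f_j)_{j=0}^n\mapsto(g_i)_{i=0}^n$ as a linear map given by an explicit matrix of polynomials and then apply the characterization of interlacing-preserving matrices from~\cite{B15}, of which \cite[Corollary~8.7]{B15} is one instance. Reading the statement with $\varphi\colon[n]_0\to[n]_0$ weakly increasing (and correcting the evident typos so that the summation index is $j$), one first rewrites
\[
g_i \;=\; (1+z)\,f_{\varphi(i)} \;+\; z\sum_{j<\varphi(i)}f_j \;+\; \sum_{j>\varphi(i)}f_j .
\]
So the only departure from Lemma~\ref{lem: inversion sequence recursion} is that the ``cut'' summand $f_{\varphi(i)}$ now lies in both partial sums and hence carries the coefficient $1+z$ instead of $1$. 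This is exactly the feature that must be accommodated; in particular it rules out deducing the statement from \cite[Corollary~8.7]{B15} by the reindexing $\varphi\mapsto\varphi+1$.

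I would next reduce to a single sequence of polynomials. Since $\varphi$ is weakly increasing, set
\[
h_k \;:=\; z\sum_{j=0}^{k}f_j \;+\; \sum_{j=k}^{n}f_j , \qquad k\in[n]_0 ,
\]
so that $g_i = h_{\varphi(i)}$ for all $i$. Granting that $(h_0,\dots,h_n)$ is an interlacing sequence of polynomials with nonnegative coefficients, the sequence $(g_i)_{i=0}^n = (h_{\varphi(0)},\dots,h_{\varphi(n)})$ is a weakly increasing selection from it with repetitions allowed, hence is itself interlacing because $h_k\preceq h_k$ and $h_k\preceq h_\ell$ whenever $k\le\ell$. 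It therefore suffices to show $(h_0,\dots,h_n)\in\mathcal{F}_n^+$.

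The core of the argument is that $(f_j)_{j=0}^n\mapsto(h_k)_{k=0}^n$ is effected by the $(n+1)\times(n+1)$ matrix $M=(M_{k,j})$ given by $M_{k,j}=z$ for $j<k$, $M_{k,k}=1+z$, and $M_{k,j}=1$ for $j>k$, and that $M$ carries $\mathcal{F}_n^+$ into $\mathcal{F}_n^+$. One checks the hypotheses of the matrix characterization in~\cite{B15}: each entry of $M$ belongs to $\{\,z,\,1,\,1+z\,\}$ and so is real-rooted with nonnegative coefficients; each row of $M$ read from the last column to the first, and each column read from the first row to the last, has the form $(1,\dots,1,\,1+z,\,z,\dots,z)$ and is therefore interlacing; and, because the staircase separating the block of $z$'s from the block of $1$'s is weakly monotone --- this is where monotonicity of $\varphi$ is used --- every contiguous $2\times2$ submatrix of $M$ (together with the boundary submatrices along the first row and last column) is one of
\[
\begin{pmatrix} z & z\\ z & z\end{pmatrix},\qquad
\begin{pmatrix} z & 1+z\\ z & z\end{pmatrix},\qquad
\begin{pmatrix} 1+z & 1\\ z & 1+z\end{pmatrix},\qquad
\begin{pmatrix} 1 & 1\\ 1+z & 1\end{pmatrix},\qquad
\begin{pmatrix} 1 & 1\\ 1 & 1\end{pmatrix},
\]
and for each of these finitely many configurations the conditions of~\cite{B15} are verified directly. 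One then concludes that $(h_k)_{k=0}^n$, and hence $(g_i)_{i=0}^n$, is interlacing.

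I expect the middle block $\left(\begin{smallmatrix} 1+z & 1\\ z & 1+z\end{smallmatrix}\right)$ to be the main obstacle: it is precisely where this recursion diverges from Lemma~\ref{lem: inversion sequence recursion}, so it cannot be reached by reindexing, and its naive determinant $(1+z)^2-z=1+z+z^2$ is not real-rooted, so no determinantal shortcut is available; one must instead push this block through the (correct, and subtler) criterion of~\cite{B15} for preserving interlacing, and the main work of the proof lives in that check. Should disentangling the precise hypotheses of~\cite{B15} for entries involving $1+z$ prove awkward, I would fall back on a direct induction on $n$, exploiting the identity $h_{k+1}=h_k+\bigl(zf_{k+1}-f_k\bigr)$ together with the standard closure of interlacing sequences under forming partial sums $f_0+\cdots+f_k$ and reverse partial sums $f_k+\cdots+f_n$; the delicate final point there, upgrading $h_k\preceq h_{k+1}$ for all $k$ to the full interlacing-sequence property, would be controlled by the degree constraints built into membership in $\mathcal{F}_n^+$.
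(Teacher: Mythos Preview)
The paper does not prove this lemma: it is quoted verbatim as \cite[Lemma~4.4]{S17} and used as a black box, so there is no proof here to compare yours against. Any comparison would have to be with the argument in~\cite{S17}.

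As to the substance of your plan: the reduction to the single sequence $h_k=z\sum_{j\le k}f_j+\sum_{j\ge k}f_j$ and the recasting of $(f_j)\mapsto(h_k)$ as multiplication by the lower-triangular-$z$ / diagonal-$(1+z)$ / upper-triangular-$1$ matrix is exactly the right set-up, and is indeed how the result is proved in~\cite{S17}. But what you have written is a plan, not a proof. You explicitly flag the block $\left(\begin{smallmatrix}1+z&1\\ z&1+z\end{smallmatrix}\right)$ as the crux and then do not verify the relevant condition for it; instead you note that the naive determinant $1+z+z^2$ is not real-rooted and propose a fallback induction. The determinant remark is a red herring --- Br\"and\'en's criterion in~\cite{B15} (Theorem~8.5) is not a real-rootedness condition on $2\times2$ minors, but rather the requirement that for all $\lambda,\mu\ge0$ the polynomial $(\lambda z+\mu)G_{k',j}G_{k,j'}$ interlace $G_{k,j}G_{k',j'}+\lambda z\,G_{k,j'}G_{k',j}+\mu\,G_{k',j}G_{k,j'}$ (or an equivalent formulation). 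For the block in question this reduces to a one-variable check that does go through. Until you actually carry out that verification, or complete the alternative induction you sketch, the argument has a genuine gap at precisely the point you yourself identify as the obstacle.
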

In~\cite{S17}, the above lemmas were used to prove that families of $\Delta_{(1,q)}$ associated via their normalized volumes to the factoradic and base-$r$ numeral systems described in Example~\ref{ex: two numeral systems} all have real-rooted Ehrhart $h^\ast$-polynomials.  
In the coming sections, we prove these families of simplices also have real-rooted local $h^\ast$-polynomials. 

\section{The Factoradic Simplex}
\label{sec: the factoradic simplex}
Given a permutation $\pi=\pi_1\pi_2\cdots\pi_n\in\mathfrak{S}_n$ we say that an index $i\in[n-1]$ is a {\bf descent} of $\pi$ if $\pi_i>\pi_{i+1}$.  
We then let
\[
\des(\pi) := \left|\{i\in[n-1] \, : \, \pi_i>\pi_{i+1}\}\right|,
\]
and we call the polynomial
\[
A_n(z) :=\sum_{\pi\in\mathfrak{S}_n}z^{\des(\pi)}
\]
the {\bf $n^{th}$ Eulerian polynomial}.
It is well-known that $A_n(z)$ is symmetric and real-rooted of degree $n-1$ for all $n\geq 1$.  
We additionally define
\[
\maxDes(\pi) := \max\{i\in[n-1] \, : \, \pi_i>\pi_{i+1}\}, 
\]
and we define a second polynomial
\[
B_n(z) :=\sum_{\pi\in\mathfrak{S}_n}z^{\maxDes(\pi)}.
\]
We further let $a_{n,k}$ and $b_{n,k}$ denote the $k^{th}$ coefficient of $A_n(z)$ and $B_n(z)$, respectively.
It follows from Lemma 3.4 in~\cite{S17} that $B_n(z)$ is also a unimodal polynomial of degree $n-1$, and in~\cite{S17}, the author used the simplices $\Delta_{(1,q)}$ to offer a geometric interpolation between the two generating polynomials $A_n(z)$ and $B_n(z)$.  
\begin{definition}
\label{def: factoradic simplex}
For $n\geq 1$, let $q^{(n)} :=(b_{n+1,1},b_{n+1,2},\ldots,b_{n+1,n})$ be the coefficient sequence of $B_{n+1}(z)$ excluding its constant term.  
The {\bf factoradic $n$-simplex} is the $n$-simplex
\[
\Delta_n^! :=\Delta_{\left(1,q^{(n)}\right)}.
\]
\end{definition}
In~\cite{S17} the author proved the following theorem which shows that if one takes the coefficient vector of the polynomial $B_n(z)$ as the vector $q$, then the polynomial $A_n(z)$ is returned as the $h^\ast$-polynomial of the simplex $\Delta_{(1,q)}$.   
\begin{theorem}
\cite[Theorem~3.5]{S17}
\label{thm: h*-polynomial of factoradic simplex}
The factoradic $n$-simplex $\Delta_n^!$ has $h^\ast$-polynomial
\[
h^\ast(\Delta_n^!;z) = A_{n+1}(z).
\]
\end{theorem}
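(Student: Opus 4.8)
The plan is to compute $h^\ast(\Delta_n^!;z)$ from the half-open parallelepiped of $\Delta_n^!$ and then to identify the resulting generating function with $A_{n+1}(z)$ via the factoradic (Lehmer code) encoding of permutations, arranged so that the lattice-point height $\omega(b)$ becomes the descent number of the associated permutation in $\mathfrak{S}_{n+1}$.

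First I would pin down the weight vector $q^{(n)}$. A ``choose-and-sort'' count shows that, for $0\le k\le n$,
\[
\sum_{j=0}^{k}b_{n+1,j}=\#\{\pi\in\mathfrak{S}_{n+1}:\maxDes(\pi)\le k\}=\frac{(n+1)!}{(n+1-k)!},
\]
since a permutation with $\maxDes(\pi)\le k$ is determined by the ordered tuple of values occupying positions $1,\dots,k$ (the later positions being forced increasing); the $k=0$ case gives $b_{n+1,0}=1$. Hence $q^{(n)}_i=b_{n+1,i}=\frac{(n+1)!}{(n+1-i)!}-\frac{(n+1)!}{(n+2-i)!}$, and the normalized volume is $Q:=Q_n=(n+1)!$, matching the factoradic place values. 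Next, the half-open analogue of the computation proving Theorem~\ref{thm: box polynomial of q} (equivalently \cite[Theorem~2.2]{BDS16}) gives $h^\ast(\Delta_n^!;z)=\sum_{b=0}^{Q-1}z^{\omega(b)}$ with $\omega(b)=b-\sum_{i=1}^{n}\lfloor q^{(n)}_ib/Q\rfloor$. Substituting the formula for $q^{(n)}_i$ and reindexing by $m=n+1-i$ turns the sum into $\sum_{m=1}^{n}\lfloor \frac{b}{m!}-\frac{b}{(m+1)!}\rfloor$; the elementary identity $\lfloor x-y\rfloor=\lfloor x\rfloor-\lfloor y\rfloor-\varepsilon$, with $\varepsilon=1$ if $\{x\}<\{y\}$ and $\varepsilon=0$ otherwise, then telescopes (using $\lfloor b/1!\rfloor=b$ and $\lfloor b/(n+1)!\rfloor=0$) to give
\[
\omega(b)=\#\bigl\{m\in[n]\ :\ \{b/m!\}<\{b/(m+1)!\}\bigr\}.
\]

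It remains to recognize this count. Writing $b$ in the factorial number system as $b=\sum_{j=1}^{n}e_j\,j!$ with $0\le e_j\le j$, and setting $e_0:=0$, a short computation---using $b\bmod (m+1)!=(b\bmod m!)+e_m\,m!$ together with $0\le b\bmod (m-1)!<(m-1)!$---shows that $\{b/m!\}<\{b/(m+1)!\}$ if and only if $e_{m-1}<e_m$. Now the assignment $\pi\mapsto b(\pi):=\sum_{k=1}^{n}L_{n+1-k}(\pi)\,k!$, where $L_i(\pi):=\#\{j>i:\pi_j<\pi_i\}$ is the Lehmer code, is a bijection $\mathfrak{S}_{n+1}\to\{0,\dots,Q-1\}$ under which $e_k=L_{n+1-k}(\pi)$ for $1\le k\le n$; and a standard inversion count shows that $i\in[n]$ is a descent of $\pi$ precisely when $L_i(\pi)>L_{i+1}(\pi)$. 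Relabeling via $i=n+1-m$ then yields $\omega(b(\pi))=\#\{i\in[n]:L_i(\pi)>L_{i+1}(\pi)\}=\des(\pi)$, so that
\[
h^\ast(\Delta_n^!;z)=\sum_{b=0}^{Q-1}z^{\omega(b)}=\sum_{\pi\in\mathfrak{S}_{n+1}}z^{\des(\pi)}=A_{n+1}(z).
\]

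The bulk of the work is bookkeeping: keeping the three reindexings coherent (the substitution $i\leftrightarrow m=n+1-i$ inside the height formula, the reversal relating $e_k$ to $L_{n+1-k}$, and the final descent-position relabeling), and handling the floor identity carefully in the boundary case where the two fractional parts agree. The genuinely combinatorial inputs---the $\maxDes$ enumeration and the descent/Lehmer-code correspondence---are classical, and since the claim is an identity of polynomials rather than a distributional statement, none of the real-rootedness machinery recalled earlier is needed here.
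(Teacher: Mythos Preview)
Your proof is correct, and it follows essentially the same route that the paper attributes to \cite{S17}: use the half-open parallelepiped formula from \cite[Theorem~2.2]{BDS16} to write $h^\ast(\Delta_n^!;z)=\sum_{b=0}^{(n+1)!-1}z^{\omega(b)}$, and then show via the factoradic/Lehmer-code bijection that $\omega(b)=\des(\pi^{(b)})$. Note that the present paper does not itself prove this theorem---it is quoted from \cite{S17}---but the surrounding discussion (the identity $\omega(b)=\des(\pi^{(b)})$, the descent correspondence between a permutation and its Lehmer code, and the formula $q_k=(n+1)!/\bigl((n-k+1)!+(n-k)!\bigr)$ implicit in Lemma~3.4 of \cite{S17}) matches your argument step for step; your explicit telescoping via $\lfloor x-y\rfloor=\lfloor x\rfloor-\lfloor y\rfloor-[\{x\}<\{y\}]$ is a clean way to carry out the height computation that the paper leaves to the cited reference.
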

Since Theorem~\ref{thm: h*-polynomial of factoradic simplex} immediately implies that $h^\ast(\Delta_n^!;z)$ is real-rooted and unimodal, it is natural to ask if these same properties hold for the associated local $h^\ast$-polynomial $\ell^\ast(\Delta_n^!;z)$.  
Indeed, we will prove that these properties do in fact carry over to $\ell^\ast(\Delta_n^!;z)$.  
To do this, we must first identify a nice combinatorial expression for this polynomial in terms of descent statistics for permutations.  
This will require a few more definitions, which will in turn shed a bit more light on the choice of the name ``factoradic $n$-simplex.''

Given a permutation $\pi\in\mathfrak{S}_n$, its associated {\bf Lehmer code} (or {\bf inversion sequence}) is the sequence $\ell(\pi) :=(\ell_{n-1},\ell_{n-2},\ldots, \ell_1)$, where
\[
\ell_i := \left|\{
0\leq j< i \, : \, \pi_{n-i}>\pi_{n-j}
\}\right|.
\]
The collection of all Lehmer codes for $\mathfrak{S}_n$ consists of the collection of lattice points
\[
L_n:=\left\{
(\ell_{n-1},\ell_{n-2},\ldots, \ell_1)\in\Z^{n-1} 
\, : \,
\ell_k\in[k]_0,\, k\in[n-1]
\right\},
\]
and there is a natural bijection between $\mathfrak{S}_n$ and $L_n$.  
In particular, to construct a permutation $\pi$ from its Lehmer code start with the zero-indexed sequence of numbers $s = (s_i)_{i=0}^{n-1} := (1,2,\ldots,n)$.  
Beginning with $k = 1$, set $\pi_k := s_{\ell_{n-k}}$, drop $s_{\ell_{n-k}}$ from the list $s$ and repeat with $k:=k+1$.  
Finally, end by setting $\pi_n$ equal to the last remaining number.  

It turns out that Lehmer codes offer computer scientists an efficient way to encode the permutations $\mathfrak{S}_n$ in a lexicographic order.  
Given two strings of positive integers $a := a_1a_2\cdots a_n$ and $b:= b_1b_2\cdots b_n$, we say that $a$ is {\bf lexicographically larger} than $b$ if and only if the leftmost nonzero number in the string $(a_1-b_1)(a_2-b_2)\cdots(a_n-b_n)$ is positive.  
To each Lehmer code $\ell:=(\ell_{n-1},\ell_{n-2},\ldots, \ell_1)\in L_n$, we associate a unique number $0\leq b^{(\ell)}<n!$
\begin{equation}
\label{eqn: factoradic number}
b^{(\ell)} :=\sum_{i=1}^{n-1}\ell_{n-i}(n-i)!, 
\end{equation}
and we note that every such number $0\leq b< n!$ has such a unique representation.  
The Lehmer code $\ell$ associated to $b$ via equation~\eqref{eqn: factoradic number} is the representation of $b$ in the factoradic numeral system, which was introduced in Example~\ref{ex: two numeral systems}.  
Since $0\leq b^{(m)}<b^{(\ell)}<n!$ if and only if $\ell$ is lexicographically larger than $m$, this provides us with a natural lexicographic ordering of the permutations via their Lehmer codes. 
In particular, given $0\leq b<n!$, we let $\pi^{(b)}$ denote the associated $b^{th}$ lexicographically largest permutation in $\mathfrak{S}_n$.  

Similar to the permutations $\pi \in \mathfrak{S}_n$, we can also define descents in Lehmer codes: assuming that $\ell_0:=0$, we say that an index $i\in[n-1]$ is a {\bf descent} of a Lehmer code $\ell:=(\ell_{n-1},\ell_{n-2},\ldots, \ell_1)$ if $\ell_i>\ell_{i-1}$, and we let 
\[
\des(\ell) := \left|\{i\in[n-1] \, : \, \ell_i>\ell_{i-1}\}\right|.  
\]
It is a short exercise to check that $n-k$ is a descent of $\ell(\pi)$ if and only if $k$ is a descent in $\pi$.
In order to prove Theorem~\ref{thm: h*-polynomial of factoradic simplex}, in~\cite{S17} the author showed that, in the context of Theorem 2.2 of~\cite{BDS16} and $\Delta_n^!$, for all $0\leq b< n!$,
\[
\omega(b) = \des(\pi^{(b)}).
\]
That is, in Theorem~\ref{thm: h*-polynomial of factoradic simplex}, the Eulerian polynomial $A_n(z)$ is computed with respect to the lexicographic ordering of the permutations.   
By Theorem~\ref{thm: box polynomial of q}, to compute the local $h^\ast$-polynomial of $\Delta_n^!$, we then need only compute which values of $0\leq b<n!$ satisfy the necessary divisibility conditions with respect to $q^{(n)} :=(b_{n+1,1},b_{n+1,2},\ldots,b_{n+1,n})$.  
As we will see in the next theorem, these are, intriguingly, exactly those values $0\leq b<n!$ that are congruent to 1 or 5 modulo 6.
In the following, we let $\overline{k}_m$ denote the equivalence class of all integers congruent to $k$ modulo $m$. 
\begin{theorem}
\label{thm: factoradic box polynomial}
The factoradic $n$-simplex $\Delta_n^!$ has local $h^\ast$-polynomial
\[
\ell^\ast(\Delta_n^!;z) = \sum_{b\in\left[(n+1)!\right]\cap\left(\overline{1}_6\cup\overline{5}_6\right)}z^{\des(\pi^{(b)})}.
\]
In particular, 
\[
\ell^\ast(\Delta_n^!;1) = \frac{n!}{3}.
\]
\end{theorem}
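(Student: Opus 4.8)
The plan is to combine Theorem~\ref{thm: box polynomial of q} with two ingredients: the identity $\omega(b)=\des(\pi^{(b)})$ for $\Delta_n^!$, already established in~\cite{S17} and recalled above, and an explicit description of the index set $T_{q^{(n)}}$. Since the first ingredient supplies the exponents, the whole theorem reduces to proving
\[
T_{q^{(n)}}=\left[(n+1)!\right]\cap\left(\overline{1}_6\cup\overline{5}_6\right);
\]
the first displayed formula is then immediate, and $\ell^\ast(\Delta_n^!;1)=\left|T_{q^{(n)}}\right|$ becomes a counting problem. Throughout, $Q=(n+1)!$ is the normalized volume of $\Delta_n^!$, so every divisibility below is modulo $(n+1)!$.

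First I would put the weights $q^{(n)}_k=b_{n+1,k}$ in closed form. The number of $\pi\in\mathfrak{S}_{n+1}$ with $\maxDes(\pi)\le k$ equals the number with $\pi_{k+1}<\pi_{k+2}<\cdots<\pi_{n+1}$, namely $\binom{n+1}{k}k!=(n+1)!/(n+1-k)!$ (choose and order the $k$ entries in the first $k$ positions, then list the rest increasingly). Taking successive differences gives, for $1\le k\le n$,
\[
q^{(n)}_k=\frac{(n+1)!}{(n+1-k)!}-\frac{(n+1)!}{(n+2-k)!}=(n+1)!\,\frac{j-1}{j!},\qquad j:=n+2-k,
\]
where $j$ runs over $\{2,3,\dots,n+1\}$ as $k$ runs over $[n]$. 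Thus $q^{(n)}_k$ divides $Q=(n+1)!$ with quotient $Q/q^{(n)}_k=j!/(j-1)=j\,(j-2)!$, and therefore $Q\mid q^{(n)}_k b$ if and only if $j(j-2)!\mid b$.

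The key step is that these constraints collapse to two. From the above,
\[
T_{q^{(n)}}=\left\{\,b\in[(n+1)!-1]\ :\ j(j-2)!\nmid b\ \text{ for all }j\in\{2,3,\dots,n+1\}\,\right\}.
\]
For $j=2$ this reads $2\nmid b$, and for $j=3$ it reads $3\nmid b$. For every $j\ge4$ the number $j(j-2)!$ is even, since $(j-2)!$ already contains a factor $2$, so an odd $b$ automatically satisfies $j(j-2)!\nmid b$; hence all constraints with $j\ge4$ are redundant. Thus $b\in T_{q^{(n)}}$ exactly when $\gcd(b,6)=1$, i.e.\ $b\in\overline{1}_6\cup\overline{5}_6$; and since $(n+1)!$ is even it is excluded from $\overline{1}_6\cup\overline{5}_6$ anyway, so $[(n+1)!-1]$ can be enlarged to $[(n+1)!]$. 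Plugging this description of $T_{q^{(n)}}$ and the identity $\omega(b)=\des(\pi^{(b)})$ into Theorem~\ref{thm: box polynomial of q} gives the first formula. For the second, $\ell^\ast(\Delta_n^!;1)=\left|T_{q^{(n)}}\right|$ counts the integers below $(n+1)!$ coprime to $6$; exactly one third of a complete residue system modulo $6$ is coprime to $6$, so for $n\ge2$ this count equals $\tfrac{(n+1)!}{3}$.

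I expect the only genuine work to be the closed-form evaluation of $q^{(n)}_k$ in the second paragraph — the maximal-descent count and the simplification $Q/q^{(n)}_k=j(j-2)!$ — together with the parity observation that erases the constraints for $j\ge4$; given Theorem~\ref{thm: box polynomial of q} and the known identity $\omega(b)=\des(\pi^{(b)})$, everything else is bookkeeping.
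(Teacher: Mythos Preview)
Your argument follows essentially the same route as the paper's proof. Both apply Theorem~\ref{thm: box polynomial of q}, compute $Q/q^{(n)}_k$ explicitly, observe that the divisibility constraints collapse to $2\nmid b$ and $3\nmid b$ because every other divisor is already a multiple of $2$ (the paper notes it is a multiple of both $2$ and $3$, but either suffices), and then invoke $\omega(b)=\des(\pi^{(b)})$ from~\cite{S17}. The only cosmetic difference is that you derive the closed form $q^{(n)}_k=(n+1)!\,(j-1)/j!$ with $j=n+2-k$ directly from a $\maxDes$ count, whereas the paper cites Lemma~3.4 of~\cite{S17} for the equivalent expression $q^{(n)}_k=(n+1)!/[(n-k+1)!+(n-k)!]$; under $j=n+2-k$ one has $j(j-2)!=(j-1)!+(j-2)!=(n-k+1)!+(n-k)!$, so the two computations coincide.

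One discrepancy worth flagging: your final count $|T_{q^{(n)}}|=(n+1)!/3$ disagrees with the paper's stated value $n!/3$. Your value is the one consistent with the first displayed formula in the theorem (the index set $[(n+1)!]\cap(\overline{1}_6\cup\overline{5}_6)$ has exactly $(n+1)!/3$ elements for $n\ge 2$), so this appears to be a slip in the paper rather than an error in your argument.
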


\begin{proof}
To prove the statement we apply Theorem~\ref{thm: box polynomial of q}.
In particular, for $q = (b_{n+1,1},b_{n+1,2},\ldots,b_{n+1,n})$, we would like to show that 
\[
T_q = \left[n+1\right]\cap\left(\overline{1}_6\cup\overline{5}_6\right).
\]
To see this, note that by Lemma 3.4 of~\cite{S17}, $0< b<(n+1)!$ satisfies $b\in T_q$ if and only if $(n+1)!$ does not divide 
\[
\frac{(n+1)!b}{(n-k+1)!+(n-k)!}
\]
for all $k\in[n]$.  
Equivalently, $b\in T_q$ if and only if $(n-k+1)!+(n-k)!$ does not divide $b$ for all $k\in[n]$.  
To see that the latter condition is equivalent to $b\in\overline{1}_6\cup\overline{5}_6$, notice that when $k = n$ we have 
\[
(n-k+1)!+(n-k)! = 1!+0! = 2,
\]
and when $k = n-1$ we have that 
\[
(n-k+1)!+(n-k)! = 2!+1! = 3.  
\]
Moreover, for every $k<n-1$, both $2$ and $3$ divide $(n-k+1)!+(n-k)!$.  
Thus, $b\in T_q$ if and only if it is never divisible by $2$ or $3$, and this happens only if $b\in\overline{1}_6\cup\overline{5}_6$.  
Since, by the proof of Theorem 3.5 in~\cite{S17}, we know that $\omega(b)  = \des(\pi^{(b)})$ for all $0\leq b<(n+1)!$, then we can apply Theorem~\ref{thm: box polynomial of q} to recover that
\[
\ell^\ast(\Delta_n^!;z) = \sum_{b\in\left[(n+1)!\right]\cap\left(\overline{1}_6\cup\overline{5}_6\right)}z^{\des(\pi^{(b)})}.
\]
To see the final statement, notice that $\ell^\ast(\Delta_n^!;1)$ is the number of integers $0\leq b<n!$ that are congruent to $1$ or $5$ modulo $6$, and this can be computed as
\[
n!-4\left(\frac{n!}{6}\right) = \frac{n!}{3}.
\]
This completes the proof.
\end{proof}

\begin{remark}
\label{rmk: factoradics at 1}
As noted in Theorem~\ref{thm: factoradic box polynomial}, $\ell^\ast(\Delta_n^!;1) = \frac{n!}{3}$ and thus the open parallelpiped $\Pi_!^\circ$ of $\Delta_n^!$ contains one third of the lattice points within the half-open parallelpiped $\Pi_!$ of $\Delta_n^!$.  
Since the local $h^\ast$-polynomial $\Delta_n^!$ enumerates these lattice points via the descent statistics of their associated permutations, it is interesting to note that the sequence $\left(\frac{n!}{3}\right)_{n=0}^\infty$ also has other known connections to permutation statistics.  
This sequence is number A002301 on the \emph{Online Encyclopedia of Integer Sequences} (\emph{OEIS}) \cite{OEIS}, where more details of these connections can be found.
\end{remark}

\begin{figure}[t!]
\begin{tikzpicture}[scale=0.5]
 	 \node at (0,0) {$1$};
 	 \node at (-1,-1) {$1$};
 	 \node at (1,-1) {$1$};
 	 \node at (-2,-2) {$1$};
 	 \node at (0,-2) {$6$};
 	 \node at (2,-2) {$1$};
 	 \node at (-3,-3) {$1$};
 	 \node at (-1,-3) {$19$};
 	 \node at (1,-3) {$19$};
 	 \node at (3,-3) {$1$};
 	 \node at (-4,-4) {$1$};
 	 \node at (-2,-4) {$48$};
 	 \node at (0,-4) {$142$};
 	 \node at (2,-4) {$48$};
 	 \node at (4,-4) {$1$};
	 \node at (-5,-5) {$1$};
 	 \node at (-3,-5) {$109$};
 	 \node at (-1,-5) {$730$};
 	 \node at (1,-5) {$730$};
 	 \node at (3,-5) {$109$};
 	 \node at (5,-5) {$1$};
	 \node at (-6,-6) {$1$};
	 \node at (-4,-6) {$234$};
 	 \node at (-2,-6) {$3087$};
 	 \node at (0,-6) {$6796$};
 	 \node at (2,-6) {$3087$};
 	 \node at (4,-6) {$234$};
 	 \node at (6,-6) {$1$};
\end{tikzpicture}
\centering
\caption{The triangle of coefficients for $\ell^\ast(\Delta_n^!;z)$ for $2\leq n\leq 8$.}
\label{fig: triangle}
\end{figure}

\begin{example}
[Some Factoradic local $h^\ast$-Polynomials]
\label{ex: some factoradic polynomials}
The triangle of coefficients of $\ell^\ast(\Delta_n^!;z)$ for $2\leq n\leq 8$ is given in Figure~\ref{fig: triangle}.  
We see that these are indeed not the {\bf derangement polynomials}, which were shown to be the local $h^\ast$-polynomials of the $s$-lecture hall simplices $P_n^s$ for $s = (2,3,\ldots,n)$ \cite{GS18}.
The simplices $\Delta_n^!$ and $P_n^s$ are both {\bf reflexive} (see~\cite{S17} for the definition of reflexive) with $h^\ast$-polynomial $A_{n+1}(z)$, and the combinatorics of both are intimately tied to that of inversion sequences \cite{SV15,S17}.  
Despite this, the distinction in their local $h^\ast$-polynomials further highlights their fundamentally different geometry and combinatorics. 
\end{example}

Using the formula from Theorem~\ref{thm: factoradic box polynomial} we can now affirmatively answer Question~\ref{quest: box polynomials} for the factoradic simplex.

\begin{theorem}
\label{thm: factoradic real-rootedness}
The local $h^\ast$-polynomial $\ell^\ast(\Delta_n^!;z)$ of the factoradic $n$-simplex $\Delta_n^!$ is real-rooted, and thus unimodal.  
\end{theorem}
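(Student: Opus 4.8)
The plan is to exhibit $\ell^\ast(\Delta_n^!;z)$ as the image of a known interlacing sequence under one of the interlacing-preserving recursions of Lemma~\ref{lem: inversion sequence recursion} or Lemma~\ref{lem: extended inversion sequence recursion}, mimicking how real-rootedness of $A_n(z)$ is established in~\cite{S17}. Recall from Theorem~\ref{thm: factoradic box polynomial} that $\ell^\ast(\Delta_n^!;z)$ enumerates, by descent number $\des(\pi^{(b)})$, exactly those $b$ with $0\le b<n!$ lying in $\overline{1}_6\cup\overline{5}_6$, i.e. the $b$ coprime to $6$. In the factoradic encoding $b=\sum_{i}\ell_{n-i}(n-i)!$, the residue of $b$ modulo $6$ depends only on $\ell_1$ and $\ell_2$ (since $(n-i)!$ is divisible by $6$ for $n-i\ge 3$), namely $b\equiv \ell_1+2\ell_2\pmod 6$ with $\ell_1\in\{0,1\}$ and $\ell_2\in\{0,1,2\}$. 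So the constraint ``$b$ coprime to $6$'' cuts the Lehmer-code cube $L_n$ down to a sub-box in which the pair $(\ell_2,\ell_1)$ is restricted to the admissible set $\{(0,1),(1,1),(2,1)\}$ — that is, $\ell_1=1$ is forced and $\ell_2$ is free in $[2]_0$. The remaining coordinates $\ell_3,\ldots,\ell_{n-1}$ range freely over $[3]_0\times\cdots\times[n-1]_0$.

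Next I would set up the refined generating polynomials indexed by the ``top'' coordinate, exactly as in the proof of Theorem~3.5 in~\cite{S17}. Define, for each value $j$ of the largest Lehmer coordinate, the polynomial $f_i^{(m)}(z)$ tracking $\des$ over the restricted Lehmer codes of length $m$ with prescribed initial coordinate, and argue that passing from length $m$ to length $m+1$ — i.e. prepending a new free coordinate $\ell_m\in[m]_0$ — is precisely an application of Lemma~\ref{lem: inversion sequence recursion}, since a descent is created at position $m$ exactly when $\ell_m>\ell_{m-1}$, a monotone threshold condition in the previous coordinate. The one-time insertion of the constrained coordinates $\ell_1=1$ and $\ell_2\in[2]_0$ is handled as a finite base case: one checks by hand that the initial vector of polynomials (for $n=2$, say, giving the row $1,1$ of Figure~\ref{fig: triangle}) forms an interlacing sequence in $\mathcal{F}^+_m$, and then the recursion propagates interlacing up to arbitrary $n$. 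Real-rootedness of $\ell^\ast(\Delta_n^!;z)$ is then immediate, since any single polynomial in an interlacing sequence is real-rooted, and unimodality follows from real-rootedness together with nonnegativity of the coefficients via~\cite[Theorem 1.2.1]{B89}. (Symmetry with respect to $n+1$ is already known from the general remark after Question~\ref{quest: box polynomials}.)

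The main obstacle I anticipate is bookkeeping the base case cleanly: the residue condition mod $6$ does not factor through a single Lehmer coordinate but through the pair $(\ell_1,\ell_2)$, so the ``restricted cube'' is not simply a product of intervals in the obvious stratifying variable, and one has to be careful that the finite block of constrained coordinates still yields a genuinely interlacing starting sequence rather than merely a real-rooted single polynomial. Concretely, one must verify that after imposing $\ell_1=1$ and letting $\ell_2\in\{0,1,2\}$ the resulting three-term family (or its appropriate regrouping) lies in $\mathcal{F}^+_2$, so that Lemma~\ref{lem: inversion sequence recursion} — or, if the threshold needs a non-strict inequality at the seam, Lemma~\ref{lem: extended inversion sequence recursion} — applies from $m=3$ onward. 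Once that seam is correctly set up, the inductive step is the same monotone-threshold argument used for the Eulerian polynomials, and the proof closes.
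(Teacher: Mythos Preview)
Your approach is essentially the paper's own: refine by the top Lehmer coordinate, apply the interlacing-preserving recursion of Lemma~\ref{lem: inversion sequence recursion} to prepend each new free coordinate, and start from a hand-checked interlacing base case after the mod-$6$ constraint has been absorbed into $(\ell_1,\ell_2)$. There is one slip to fix: the admissible set for $(\ell_2,\ell_1)$ is $\{(0,1),(2,1)\}$, not $\{(0,1),(1,1),(2,1)\}$, since $(\ell_2,\ell_1)=(1,1)$ gives $\ell_1+2\ell_2\equiv 3\pmod 6$; so $\ell_1=1$ is forced and $\ell_2\in\{0,2\}$, not all of $[2]_0$. With that correction the initial sequence indexed by $\ell_2\in\{0,1,2\}$ is $(z,0,z^2)$, which is interlacing (the zero polynomial interlaces everything by convention, and $z\preceq z^2$), and this is exactly the base case the paper uses; from there your inductive step via Lemma~\ref{lem: inversion sequence recursion} is identical to the paper's.
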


\begin{proof}
To prove that the polynomials $\ell^\ast(\Delta_n^!;z)$ are real-rooted we will identify a polynomial recursion that will allow us to apply Lemma~\ref{lem: inversion sequence recursion}.
For convenience, we first recall that the descents in a permutation $\pi\in\mathfrak{S}_n$ are in bijection with the descents in the associated Lehmer code $\ell(\pi)$.  
Thus, we may also express the polynomial $\ell^\ast(\Delta_n^!;z)$ as
\[
\ell^\ast(\Delta_n^!;z) = 
\sum_{b\in\left[(n+1)!\right]\cap\left(\overline{1}_6\cup\overline{5}_6\right)}z^{\des(\ell(b))},
\]
where $\ell(b) = (\ell_{b,n},\ell_{b,n-1},\ldots,\ell_{b,1}) :=\ell(\pi^{(b)})$ for all $0\leq b< (n+1)!$.  
Recall that $0\leq \ell_{b,n}<n$, and let $\chi(p)$ denote the boolean function yielding $1$ if $p$ is a true statement and $0$ otherwise. 
For $0\leq k< n$, we then define the polynomial
\begin{equation}
\label{eqn: refined box}
B_{n,k}(z) :=\sum_{b\in\left[(n+1)!\right]\cap\left(\overline{1}_6\cup\overline{5}_6\right)}\chi\left(\ell_{b,n} = k\right)z^{\des(\ell(b))}.
\end{equation}
Notice first that
\[
\ell^\ast(\Delta_n^!;z) = \sum_{k=0}^nB_{n,k}(z),
\quad
\mbox{and that}
\quad
\ell^\ast(\Delta_n^!;z) = B_{n+1,0}(z).
\]
Moreover, it is quick to check that we have the recursion
\begin{equation}
\label{eqn: factoradic recursion}
B_{n,k}(z) = z\sum_{t<k}B_{n-1,t}(z)+\sum_{t\geq k}B_{n-1,t}(z), 
\end{equation}
with initial conditions $B_{3,0}(z) = z$, $B_{3,1}(z) = 0$, and $B_{3,2}(z) = z^2$.  
We note here that we must take the initial conditions with $n =3$ since if we express $b$ as in equation~\eqref{eqn: factoradic number}
\[
b = \sum_{i=1}^{n-1}\ell_{n-i}(n-i)!, 
\]
and set $b^\prime := \sum_{i=1}^{n-2}\ell_{n-i}(n-i)!,$ then we require that  $b$ is congruent to $1$ or $5$ modulo $6$ if and only if $b^\prime$ is congruent to $1$ or $5$ modulo $6$.  
However, this fails for $n<3$.  
(Note however, that $\ell^\ast(\Delta_1^!;z) = 0$ and $\ell^\ast(\Delta_2^!;z) =z$, so real-rootedness will still hold in all cases.)
Given the recursion in equation~\ref{eqn: factoradic recursion}, the local $h^\ast$-polynomial of the factoradic $n$-simplex $\Delta_n^!$ is then real-rooted, and thus unimodal, since we observed that $\ell^\ast(\Delta_n^!;z) = B_{n+1,0}(z)$.  
\end{proof}

\begin{remark}
\label{rmk: on the recursion used}
The same type of recursion as given in equation~\eqref{eqn: factoradic recursion} has recently been used in order to recover real-rootedness results for a number of polynomial generating functions in combinatorics \cite{BL16,GS18,SV15,S17}.  
In particular, this recursion as applied to refined descent polynomials over inversion sequences, defined analogously to those in equation~\eqref{eqn: refined box}, can be viewed as the ``reverse'' of the recursion applied to inversion sequences to recover real-rootedness of the Ehrhart $h^\ast$-polynomials of the $s$-lecture hall simplices \cite{SV15}.
\end{remark}

Since the local $h^\ast$-polynomials $\ell^\ast(\Delta_n^!;z)$ are both symmetric and real-rooted, it follows that they also have only nonnegative integer coefficients in another well-studied polynomial basis.
A polynomial $p(z)\in\R[z]$ of degree at most $n$ is called {\bf $\gamma$-nonnegative} (or {\bf $\gamma$-positive}) if it has only nonnegative coefficients when expressed in the basis $\left\{z^i(z+1)^{n-2i}\right\}_{i=0}^{\lfloor n/2\rfloor}$.
It is well-known that if a $p(z)$ is real-rooted and symmetric with nonnegative coefficients in the standard basis, then $p(z)$ is $\gamma$-nonnegative (see for instance Remark 3.1 in~\cite{B15}).  
Thus, we have the following corollary.

\begin{corollary}
\label{cor: factoradic gamma-nonnegative}
The local $h^\ast$-polynomial $\ell^\ast(\Delta_n^!;z)$ of the factoradic $n$-simplex $\Delta_n^!$ is $\gamma$-nonnegative.  
\end{corollary}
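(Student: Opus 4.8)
The plan is to deduce Corollary~\ref{cor: factoradic gamma-nonnegative} directly from the two structural facts we have already established about $\ell^\ast(\Delta_n^!;z)$, namely that it is symmetric and real-rooted. Concretely, I would first invoke the general observation stated in the paragraph preceding the corollary: a polynomial $p(z) \in \R[z]$ of degree at most $n$ that is real-rooted, symmetric with respect to $n$, and has only nonnegative coefficients in the standard monomial basis is automatically $\gamma$-nonnegative, i.e.\ has nonnegative coefficients when written in the basis $\left\{z^i(z+1)^{n-2i}\right\}_{i=0}^{\lfloor n/2 \rfloor}$. This is classical (see Remark 3.1 in~\cite{B15}), so I would cite it rather than reprove it.

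The remaining task is therefore just to check that $\ell^\ast(\Delta_n^!;z)$ meets all three hypotheses. First, nonnegativity of the coefficients is immediate from the defining sum $\ell^\ast(\Delta_n^!;z) = \sum_{b} z^{\des(\pi^{(b)})}$ in Theorem~\ref{thm: factoradic box polynomial}, since it is a sum of monomials with coefficient $1$. Second, real-rootedness is exactly the content of Theorem~\ref{thm: factoradic real-rootedness}, which we may assume. Third, symmetry: as noted in the introduction just after Question~\ref{quest: box polynomials}, local $h^\ast$-polynomials of lattice $d$-simplices are always symmetric with respect to $d+1$; for $\Delta_n^!$ this means $\ell^\ast(\Delta_n^!;z)$ is symmetric with respect to $n+1$, which is the degree bound we need in order to apply the $\gamma$-nonnegativity criterion with parameter $n+1$ in place of $n$. (One should double-check the degree and the center of symmetry agree with what the criterion requires; for $\Delta_n^!$ the polynomial $A_{n+1}(z)$-flavored combinatorics put the support inside $\{1,\dots,n\}$, symmetric about $(n+1)/2$, so the criterion applies with the ambient degree bound $n+1$.)

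Assembling these three checks and feeding them into the cited criterion completes the proof; no new computation is needed. I would present this as a two- or three-sentence argument: state that symmetry is automatic for local $h^\ast$-polynomials, recall real-rootedness from Theorem~\ref{thm: factoradic real-rootedness}, and then apply the standard implication ``real-rooted $+$ symmetric $+$ nonnegative $\Rightarrow$ $\gamma$-nonnegative.''

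The only real subtlety — and the one place I would be careful — is bookkeeping about the center and degree of symmetry: the $\gamma$-nonnegativity statement is phrased for a polynomial of degree at most $n$ symmetric about $n/2$, and one must match this to $\ell^\ast(\Delta_n^!;z)$ with its own parameters (degree at most $n+1$, center $(n+1)/2$, after accounting for the fact that the local $h^\ast$ of a $d$-simplex is symmetric with respect to $d+1$). There is no mathematical obstacle here, just the need to state the parity/centering correctly so the $\gamma$-basis $\left\{z^i(z+1)^{(n+1)-2i}\right\}$ is the right one; this is purely a matter of writing the hypothesis with the correct parameter rather than a genuine difficulty.
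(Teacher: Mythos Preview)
Your proposal is correct and follows essentially the same approach as the paper: the paper's proof is a single sentence observing that $\ell^\ast(\Delta_n^!;z)$ is symmetric, real-rooted, and has nonnegative coefficients, and then invoking the standard implication (Remark 3.1 in~\cite{B15}) to conclude $\gamma$-nonnegativity. Your additional care about the center of symmetry and degree bookkeeping is fine but not strictly necessary at the level of detail the paper adopts.
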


\begin{proof}
Since $\ell^\ast(\Delta_n^!;z)$ is symmetric and real-rooted with nonnegative coefficients it follows that it is also $\gamma$-nonnegative.
\end{proof}

\begin{remark}
\label{rmk: factoradic gamma-nonnegativity}
In Corollary 3.3 of~\cite{B15}, the author presents a sufficient condition for a restricted Eulerian polynomial
\[
A(T;z) := \sum_{\pi\in T\subset \mathfrak{S}_n}z^{\des(\pi)}
\]
to be $\gamma$-nonnegative.  
This sufficient condition requires that the subset $T\subset\mathfrak{S}_n$ be invariant under the {\bf valley--hopping} action of $\Z_2^n$ on $\mathfrak{S}_n$ first considered in~\cite{FS73}.  
Since the polynomial $\ell^\ast(\Delta_n^!;z)$ is an example of restricted Eulerian polynomial (as seen by Theorem~\ref{thm: factoradic box polynomial}), it is natural to ask whether or not its $\gamma$-nonnegativity arises as a consequence of Corollary 3.3 from~\cite{B15}.  
In fact, it does not.  
To see this, suppose that 
\[
T := 
\left\{
\pi^{(b)}\in\mathfrak{S}_n
\, : \,
b\in\overline{1}_6\cup\overline{5}_6
\right\}, 
\]
and notice that $\pi^{((n+1)!-1)} = n\cdots321\in T$.  
The unique element of the orbit of $\pi^{((n+1)!-1)}$ under the valley--hopping action that has no double descents (i.e.~no indices $k\in[n]$ for which $\pi_{k-1}>\pi_k>\pi_{k+1}$) is $\pi^{(0)} = 123\cdots n$.  
However, $\pi^{(0)}\notin T$, and thus $T$ cannot be invariant under the valley--hopping action.  

Since the valley--hopping action cannot be used to give a direct combinatorial proof of the $\gamma$-nonnegativity of $\ell^\ast(\Delta_n^!;z)$ that is implied by its real-rootedness, it would therefore be of  interest to provide an alternate combinatorial interpretation of the coefficients of $\ell^\ast(\Delta_n^!;z)$ when it is written in the $\gamma$-basis.
\end{remark}

\section{The Base-$r$ Simplex}
\label{sec: the base-r simplex}

A second family of $\Delta_{(1,q)}$'s that are known to have real-rooted $h^\ast$-polynomials are those with 
\[
q^{(r,n)}:=\left((r-1),(r-1)r,\ldots,(r-1)r^{i-1},\ldots,(r-1)r^{n-1}\right),
\]
for $r\geq 2$ and $n\geq 1$ \cite{S17}.  
For simplicity, we will set $\mathcal{B}_{(r,n)} :=\Delta_{(1,q^{(r,n)})}$ throughout this section.  
In~\cite{S17}, these simplices are referred to as the {\bf base-$r$ $n$-simplices} since the normalized volume of $\mathcal{B}_{(r,n)}$ is equal to $r^n$, and this is the $n^{th}$ place value in the base-$r$ numeral system. 

In~\cite{GS18} the authors observed that the $s$-lecture hall simplices, which have real-rooted $h^\ast$-polynomials also have real-rooted local $h^\ast$-polynomials.  
In~\cite{S17} and Section~\ref{sec: the factoradic simplex} we, respectively, saw that the same holds true for the factoradic simplex $\Delta_n^!$.  
In this section, we continue this trend by further observing that all base-$r$ simplices have real-rooted local $h^\ast$-polynomials as well.  
To do this, we must first define an additional family of polynomials.  

For $r\geq2$ and $n\geq 1$ define the polynomial
\[
f_{(r,n)}(z) := (1+z+z^2+\cdots+z^{r-1})^n.
\]
By writing each $0\leq m\leq n(r-1)$ as $m = p(r-1)+q$ where $0\leq q<r-1$, the uniqueness of this expression guarantees that for any $r\geq2$ and polynomial $f(z)\in\R[z]$ there exist unique polynomials $f^{(0)},\ldots,f^{(r-2)}\in\R[z]$ such that
\[
f(z) = f^{(0)}(z^{r-1})+zf^{(1)}(z^{r-1})+z^2f^{(2)}(z^{r-1})+\cdots+z^{r-2}f^{(r-2)}(z^{r-1}).
\]
For $\ell \in[r-2]_0$ we then let $f^{\langle r-1,\ell\rangle} := f^{(\ell)}$.  
In~\cite{S17}, the author gave the following formula for $h^\ast(\mathcal{B}_{(r,n)};z)$, from which they recovered that this $h^\ast$-polynomial is real-rooted.
\begin{theorem}
\cite[Theorem~4.2]{S17}
\label{thm: base-r h*-polynomial}
For $r\geq 2$ and $n\geq 1$, the base-$r$ $n$-simplex $\mathcal{B}_{(r,n)}$ has $h^\ast$-polynomial
\[
h^\ast(\mathcal{B}_{(r,n)};z) = f^{\langle r-1,0\rangle}_{(r,n)} +z\sum_{\ell=1}^{r-2}f^{\langle r-1,\ell\rangle}_{(r,n)}.
\]
\end{theorem}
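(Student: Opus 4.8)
The plan is to compute $h^\ast(\mathcal{B}_{(r,n)};z)$ from the lattice points of the \emph{half-open} fundamental parallelepiped of $\mathcal{B}_{(r,n)}$, graded by height, and then to peel off the top base-$r$ digit of the grading parameter so as to obtain a recursion in $n$ that can be matched, piece by piece, against a recursion satisfied by the right-hand side. Since $\mathcal{B}_{(r,n)}=\Delta_{(1,q^{(r,n)})}$ has normalized volume $r^n$, the computation of Theorem~\ref{thm: box polynomial of q} carried out for the half-open box (this is \cite[Theorem~2.2]{BDS16}) gives
\[
h^\ast(\mathcal{B}_{(r,n)};z)=\sum_{b=0}^{r^n-1}z^{\omega_n(b)},
\qquad
\omega_n(b):=b-\sum_{i=1}^{n}\left\lfloor\frac{(r-1)r^{i-1}b}{r^n}\right\rfloor=b-\sum_{m=1}^{n}\left\lfloor\frac{(r-1)b}{r^m}\right\rfloor.
\]
(Expanding $(1+z+\cdots+z^{r-1})^n$ and collecting exponents by residue modulo $r-1$ identifies the claimed polynomial with $\sum z^{\lceil(d_1+\cdots+d_n)/(r-1)\rceil}$, the sum over tuples $(d_1,\ldots,d_n)$ with each $d_i\in[r-1]_0$; so the theorem is a coefficientwise identity between two families of size $r^n$, and since the naive ``base-$r$ digits of $b$'' map does not realize it, I will instead match recursions.)

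The key point is a digit-peeling identity for $\omega_n$. Write $b=a+c\,r^{n-1}$ with $a\in[r^{n-1}-1]_0$ and $c\in[r-1]_0$, and set $j:=\lfloor(r-1)a/r^{n-1}\rfloor\in[r-2]_0$; then
\[
\omega_n(b)=\omega_{n-1}(a)+c-\left\lfloor\frac{(r-1)b}{r^n}\right\rfloor,
\qquad\text{where}\qquad
\left\lfloor\frac{(r-1)b}{r^n}\right\rfloor=\begin{cases}c&\text{if }c\leq j,\\ c-1&\text{if }c>j.\end{cases}
\]
This is elementary floor arithmetic: for $m\leq n-1$ the term $(r-1)c\,r^{n-1}/r^m$ is an integer, the telescoping sum $\sum_{m=1}^{n-1}(r-1)c\,r^{n-1-m}=c(r^{n-1}-1)$ absorbs all of $c\,r^{n-1}$ save a leftover $c$, and the remaining top floor is $\lfloor(j+(r-1)c)/r\rfloor=c+\lfloor(j-c)/r\rfloor$, which equals $c$ or $c-1$ according as $c\leq j$ or $c>j$ --- the lone carry that can occur in the top base-$r$ digit of $(r-1)b$. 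Now, for $\ell\in[r-2]_0$, set $h_{n,\ell}(z):=\sum z^{\omega_n(b)}$ with the sum over those $b\in[r^n-1]_0$ with $\lfloor(r-1)b/r^n\rfloor=\ell$, so $h^\ast(\mathcal{B}_{(r,n)};z)=\sum_{\ell=0}^{r-2}h_{n,\ell}(z)$. The identity says a lattice point of level $\ell$ in dimension $n-1$ produces, as $c$ ranges over $[r-1]_0$, one point of each level $0,\ldots,\ell$ with unchanged $\omega$ and one point of each level $\ell,\ldots,r-2$ with $\omega$ raised by $1$; hence
\[
h_{n,\ell'}(z)=\sum_{\ell=\ell'}^{r-2}h_{n-1,\ell}(z)+z\sum_{\ell=0}^{\ell'}h_{n-1,\ell}(z),\qquad\ell'\in[r-2]_0,
\]
which is precisely a recursion of the form appearing in Lemma~\ref{lem: extended inversion sequence recursion} (with $\varphi$ the identity).

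To match this against the right-hand side, use $f_{(r,n)}=(1+z+\cdots+z^{r-1})\,f_{(r,n-1)}$ and carry out the multiplication inside the decomposition $\R[z]=\bigoplus_{\ell=0}^{r-2}z^{\ell}\,\R[z^{r-1}]$, in which $1+z+\cdots+z^{r-1}$ has $\ell$th component $1+z^{r-1}$ for $\ell=0$ and $1$ for $1\leq\ell\leq r-2$; this yields the companion recursion
\[
f^{\langle r-1,m\rangle}_{(r,n)}(z)=\sum_{\ell=0}^{m}f^{\langle r-1,\ell\rangle}_{(r,n-1)}(z)+z\sum_{\ell=m}^{r-2}f^{\langle r-1,\ell\rangle}_{(r,n-1)}(z),\qquad m\in[r-2]_0.
\]
I would then prove by induction on $n$ --- the case $n=1$ being immediate from $f_{(r,1)}=1+z+\cdots+z^{r-1}$ together with $\omega_1(0)=0$ and $\omega_1(a)=1$ for $a\in\{1,\ldots,r-1\}$ --- the refined identities
\[
h_{n,0}(z)=f^{\langle r-1,0\rangle}_{(r,n)}(z)
\qquad\text{and}\qquad
h_{n,\ell}(z)=z\,f^{\langle r-1,\,r-1-\ell\rangle}_{(r,n)}(z)\quad(\ell\in\{1,\ldots,r-2\}).
\]
The inductive step is a direct comparison of the two displayed recursions, done separately for $\ell'=0$ and for $\ell'\in\{1,\ldots,r-2\}$; the only mild surprise is the index reversal $\ell\mapsto r-1-\ell$, which is dictated by the two recursions and disappears on summing. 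Summing over $\ell$ then gives $h^\ast(\mathcal{B}_{(r,n)};z)=f^{\langle r-1,0\rangle}_{(r,n)}+z\sum_{m=1}^{r-2}f^{\langle r-1,m\rangle}_{(r,n)}$, as claimed.

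The step I expect to be the real obstacle is the digit-peeling identity for $\omega_n$: each ingredient is elementary, but getting the floors, the telescoping constant $c(r^{n-1}-1)$, and the single top-digit carry to line up correctly --- and separately handling the degenerate cases (small $n$, and $r=2$, where only the level $\ell'=0$ survives and the refined claim collapses to $h_{n,0}(z)=(1+z)^n$) --- takes some care; everything afterward is mechanical bookkeeping with the two recursions. Finally, it is worth noting that the very same recursion for the $h_{n,\ell}$, fed into Lemma~\ref{lem: extended inversion sequence recursion}, simultaneously delivers the real-rootedness of $h^\ast(\mathcal{B}_{(r,n)};z)$ established in~\cite{S17}, so the formula and the root behavior issue from a single structure.
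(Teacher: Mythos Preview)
Your argument is correct. Note, however, that the paper does not itself prove this theorem: it is quoted as \cite[Theorem~4.2]{S17}, and the only information the paper gives about the original proof is that it establishes the recursion
\[
f^{\langle r-1,\ell\rangle}_{(r,n)} = \sum_{i=0}^\ell f^{\langle r-1,i\rangle}_{(r,n-1)}+z\sum_{i=\ell}^{r-2}f^{\langle r-1,i\rangle}_{(r,n-1)}
\]
(equation~\eqref{eqn: base-r recursion}). Your proof derives exactly this recursion for the $f^{\langle r-1,\ell\rangle}_{(r,n)}$ and matches it against an analogous recursion for the height-refined pieces $h_{n,\ell}$ of the $h^\ast$-polynomial, so your approach is consonant with what the paper attributes to~\cite{S17}. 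Your digit-peeling computation of $\omega_n(a+cr^{n-1})$ and the resulting carry analysis are correct, the refined identities $h_{n,0}=f^{\langle r-1,0\rangle}_{(r,n)}$ and $h_{n,\ell}=z\,f^{\langle r-1,\,r-1-\ell\rangle}_{(r,n)}$ go through by the induction you outline, and the index reversal $\ell\mapsto r-1-\ell$ indeed disappears upon summing.
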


In a similar fashion, one can show that the local $h^\ast$-polynomial can also be computed in terms of the polynomials $f^{\langle r-1,\ell\rangle}_{(r,n)}$.
\begin{theorem}
\label{thm: base-r box polynomial}
For $r\geq2$ and $n\geq 1$, the base-$r$ $n$-simplex has local $h^\ast$-polynomial 
\[
\ell^\ast(\mathcal{B}_{(r,n)};z) = z\sum_{i=0}^{r-2}f^{\langle r-1,i\rangle}_{(r,n-1)}+z\sum_{\ell=1}^{r-2}\left(\sum_{i=0}^{\ell-1}f^{\langle r-1,i\rangle}_{(r,n-1)}+z\sum_{i=\ell}^{r-2}f^{\langle r-1,i\rangle}_{(r,n-1)}\right).
\]
\end{theorem}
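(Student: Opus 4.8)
The plan is to mimic the proof of Theorem~\ref{thm: base-r h*-polynomial} in~\cite{S17}, but starting from the characterization of the open parallelepiped lattice points given by Theorem~\ref{thm: box polynomial of q} rather than the half-open version. Concretely, I would apply Theorem~\ref{thm: box polynomial of q} to $q = q^{(r,n)}$ with $Q = r^n$, so that
\[
\ell^\ast(\mathcal{B}_{(r,n)};z) = \sum_{b\in T_{q^{(r,n)}}} z^{\omega(b)},
\]
and then understand the set $T_{q^{(r,n)}}$ and the exponent $\omega(b)$ in terms of the base-$r$ digits of $b$. Writing $b = \sum_{i=0}^{n-1} c_i r^i$ with $c_i \in [r-1]_0$, one computes that $q^{(r,n)}_i b = (r-1)r^{i-1}b$, and the condition $r^n \nmid q^{(r,n)}_i b$ becomes a condition on the top digits of $b$. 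The key arithmetic observation (already implicit in~\cite{S17}) is that $\omega(b)$ records, digit by digit, the ``carry pattern'' in the base-$r$ expansion, which is exactly what the $f^{\langle r-1,\ell\rangle}_{(r,n)}$ decomposition is built to track.

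Next I would set up the digit recursion. The idea is to condition on the value of the leading digit $c_{n-1}$ of $b$. For a fixed leading digit, the remaining digits range over all base-$r$ strings of length $n-1$, i.e.\ over $[r^{n-1}]_0$, which is why the polynomials $f^{\langle r-1,i\rangle}_{(r,n-1)}$ appear with index $n-1$. One then checks that: (i) the membership condition $b\in T_{q^{(r,n)}}$ excludes exactly the strings whose carry pattern at the top is trivial — this is the reason the leading sum in the claimed formula starts the telescoping sums at $i=0$ and why the outer factor of $z$ is present (every point of $\Pi^\circ$ has height at least $1$, matching the symmetry of $\ell^\ast$ about $d+1$); and (ii) the contribution of the leading digit $\ell$ to $\omega(b)$ splits the lower-order generating function into the ``no carry'' part $\sum_{i=0}^{\ell-1} f^{\langle r-1,i\rangle}_{(r,n-1)}$ and the ``carry'' part $z\sum_{i=\ell}^{r-2} f^{\langle r-1,i\rangle}_{(r,n-1)}$, which is precisely the inner parenthesized expression in the statement. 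Summing over $\ell = 1,\dots,r-2$ (the leading digit $\ell=0$ and $\ell = r-1$ giving, respectively, the excluded case and the separate first term) yields the displayed identity.

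I expect the main obstacle to be step (ii): carefully tracking how the floor functions in $\omega(b) = b - \sum_i \lfloor q^{(r,n)}_i b / r^n\rfloor$ interact with the base-$r$ digits, and in particular verifying that the ``carry'' contributed by the leading digit $\ell$ is captured by a single extra factor of $z$ uniformly across all lower-order digit strings. This is a bookkeeping argument about how $\lfloor (r-1)r^{i-1} b / r^n \rfloor$ depends on the digits $c_{n-1}, c_{n-2}, \dots$ of $b$; the cleanest route is probably to reuse verbatim the digit-level computation of $\omega(b)$ already performed in the proof of Theorem~\ref{thm: base-r h*-polynomial} in~\cite{S17}, and then simply intersect with the open-parallelepiped condition from Theorem~\ref{thm: box polynomial of q}, observing that this intersection removes precisely the digit patterns for which some $q^{(r,n)}_i b$ is divisible by $r^n$, i.e.\ the ``all-trivial-carry'' strings, leaving the stated formula. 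Once this identity is in hand, the only remaining routine checks are the small cases (e.g.\ $n=1$) where $\mathcal{B}_{(r,n-1)}$ is interpreted as a point.
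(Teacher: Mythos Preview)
Your overall plan --- apply Theorem~\ref{thm: box polynomial of q}, describe $T_{q^{(r,n)}}$ and $\omega(b)$ in terms of base-$r$ digits, then split the sum by the value of one distinguished digit --- is the right shape, but you have identified the wrong digit. Writing $b = \sum_{j=0}^{n-1} c_j r^j$, the condition $r^n \mid q^{(r,n)}_i b = (r-1)r^{i-1}b$ is, since $\gcd(r-1,r)=1$, equivalent to $r^{n-i+1}\mid b$, i.e.\ to $c_0=c_1=\cdots=c_{n-i}=0$. Ranging over $i\in[n]$, the strongest of these is $i=n$, which says $c_0=0$. Hence
\[
T_{q^{(r,n)}}=\{\,b\in[r^n-1]:r\nmid b\,\}=\{\,b:c_0\neq 0\,\},
\]
a condition on the \emph{lowest} digit, not the leading one. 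Consequently, if you fix the leading digit $c_{n-1}$ as you propose, the remaining $n-1$ digits do \emph{not} range freely over $[r^{n-1}]_0$ (they still carry the constraint $c_0\neq 0$), so the inner sums are not the clean $f^{\langle r-1,i\rangle}_{(r,n-1)}$ you want, and your accounting of which leading-digit cases are ``excluded'' versus ``the separate first term'' does not match the formula.

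The paper's argument instead exploits the bottom digit directly. Observing $T_{q^{(r,n)}}=\{b:r\nmid b\}$, the complement inside $[r^n-1]_0$ is $\{rb':0\le b'<r^{n-1}\}$, and a short floor computation shows $\omega(rb')=\omega(b')$. Therefore
\[
\ell^\ast(\mathcal{B}_{(r,n)};z)=h^\ast(\mathcal{B}_{(r,n)};z)-h^\ast(\mathcal{B}_{(r,n-1)};z),
\]
and the displayed identity then falls out of Theorem~\ref{thm: base-r h*-polynomial} together with the recursion $f^{\langle r-1,\ell\rangle}_{(r,n)}=\sum_{i=0}^{\ell}f^{\langle r-1,i\rangle}_{(r,n-1)}+z\sum_{i=\ell}^{r-2}f^{\langle r-1,i\rangle}_{(r,n-1)}$. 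Your plan can be repaired by conditioning on $c_0$ rather than $c_{n-1}$; you will then recover exactly this subtraction.
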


\begin{proof}
To deduce the desired formula for $\ell^\ast(\mathcal{B}_{(r,n)};z)$ we first apply Theorem~\ref{thm: box polynomial of q}.  
We have that $Q = r^n$, and so each lattice point in the half-open parallelpiped for $\mathcal{B}_{(r,n)}$ corresponds to an integer $0\leq b<r^n$.  
As such, we will denote the lattice point corresponding to $b$ by $x^{(b)}$. 
Letting $\Pi_{(r,n)}^\circ$ denote the open parallelpiped of $\mathcal{B}_{(r,n)}$, Theorem~\ref{thm: box polynomial of q} tells us that $x^{(b)}\in\Pi_{(r,n)}^\circ$ if and only if $r^n \nmid \,(r-1)r^{i-1}b$ for all $i\in[n]$.  
Equivalently, $x^{(b)}\in\Pi_{(r,n)}^\circ$ if and only if $r^{n-i+1} \,\nmid (r-1)b$ for all $i\in[n]$. 
Furthermore, since $r$ and $r-1$ are always coprime, then so are $r^{n-i+1}$ and $(r-1)$ for all $i\in[n]$.  
Therefore, $x^{(b)}\in\Pi_{(r,n)}^\circ$ if and only if $r^{n-i+1} \,\nmid b$ for all $i\in[n]$. 
Equivalently, we then have that $x^{(b)}\in\Pi_{(r,n)}^\circ$ if and only if $r\nmid \, b$. 
It follows that,
\begin{equation}
\label{eqn: h* subtraction}
\ell^\ast(\mathcal{B}_{(r,n)};z) = h^\ast(\mathcal{B}_{(r,n)};z) - \sum_{0<b<r^n\, : \, r\nmid\, b}z^{\omega(b)}.
\end{equation}
To compute the latter sum in equation~\eqref{eqn: h* subtraction}, notice that for $0<b<r^n$, if $r\big| b$ such that $b = rb^\prime$ for $0<b^\prime<r^{n-1}$ then 
\begin{equation*}
\begin{split}
\omega(b)
&= b - \sum_{i = 1}^n\left\lfloor\frac{(r-1)b}{r^i}\right\rfloor,\\
&= rb^\prime -(r-1)b^\prime - \sum_{i = 1}^{n-1}\left\lfloor\frac{(r-1)b^\prime}{r^i}\right\rfloor,\\
&= b^\prime - \sum_{i = 1}^{n-1}\left\lfloor\frac{(r-1)b^\prime}{r^i}\right\rfloor,\\
&= \omega(b^\prime).
\end{split}
\end{equation*}
Therefore, we have that the latter sum in equation~\eqref{eqn: h* subtraction} is actually the $h^\ast$-polynomial of $\mathcal{B}_{(r,n-1)}$, the base-$r$ simplex of one dimension less. 
That is,
\[
\sum_{0<b<r^n\, : \, r\nmid\, b}z^{\omega(b)} = h^\ast(\mathcal{B}_{(r,n-1)};z).
\]
Consequently, we have that 
\begin{equation}
\label{eqn: dimension subtraction}
\ell^\ast(\mathcal{B}_{(r,n)};z) = h^\ast(\mathcal{B}_{(r,n)};z) - h^\ast(\mathcal{B}_{(r,n-1)};z).
\end{equation}
In the proof of Theorem 4.2 in~\cite{S17}, it is shown that the polynomials $f^{\langle r-1,\ell\rangle}_{(r,n)}$ for $0\leq \ell\leq r-2$ satisfy the recursion
\begin{equation}
\label{eqn: base-r recursion}
f^{\langle r-1,\ell\rangle}_{(r,n)} = \sum_{i=0}^\ell f^{\langle r-1,i\rangle}_{(r,n-1)}+z\sum_{i=\ell}^{r-2}f^{\langle r-1,i\rangle}_{(r,n-1)}.
\end{equation}
Using the recursion in equation~\eqref{eqn: base-r recursion} together with Theorem~\ref{thm: base-r h*-polynomial} we can deduce from equation~\eqref{eqn: dimension subtraction} that 
\begin{equation*}
\begin{split}
\ell^\ast(\mathcal{B}_{(r,n)};z) 
&= h^\ast(\mathcal{B}_{(r,n)};z) - h^\ast(\mathcal{B}_{(r,n-1)};z),\\
&= \left(f^{\langle r-1,0\rangle}_{(r,n)} - f^{\langle r-1,0\rangle}_{(r,n-1)}\right) + z\sum_{\ell=1}^{r-2}\left(f^{\langle r-1,\ell\rangle}_{(r,n)} - f^{\langle r-1,\ell\rangle}_{(r,n-1)}\right),\\
&= z\sum_{i=0}^{r-2}f^{\langle r-1,i\rangle}_{(r,n-1)}+z\sum_{\ell=1}^{r-2}\left(\sum_{i=0}^{\ell-1}f^{\langle r-1,i\rangle}_{(r,n-1)}+z\sum_{i=\ell}^{r-2}f^{\langle r-1,i\rangle}_{(r,n-1)}\right),
\end{split}
\end{equation*}
which completes the proof.
\end{proof}

It then follows from the formula given in Theorem~\ref{thm: base-r box polynomial} that $\ell^\ast(\mathcal{B}_{(r,n)};z)$ is also a real-rooted, and therefore unimodal, polynomial for all $r\geq 2$ and $n\geq 1$.  
\begin{corollary}
\label{cor: base-r real-rootedness}
The local $h^\ast$-polynomial $\ell^\ast(\mathcal{B}_{(r,n)};z)$ of the base-$r$ $n$-simplex is real-rooted, and thus unimodal, for all $r\geq2$ and $n\geq1$.
\end{corollary}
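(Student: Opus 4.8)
The plan is to extract an interlacing sequence of polynomials from the recursion \eqref{eqn: base-r recursion} and then apply Lemma~\ref{lem: extended inversion sequence recursion} to conclude that $\ell^\ast(\mathcal{B}_{(r,n)};z)$ is real-rooted. The starting point is the observation, already recorded in the proof of Theorem~\ref{thm: base-r h*-polynomial} in \cite{S17}, that the sequence $\left(f^{\langle r-1,\ell\rangle}_{(r,n)}\right)_{\ell=0}^{r-2}$ is an interlacing sequence of polynomials with nonnegative coefficients for every fixed $n\geq 1$; this is exactly what the recursion \eqref{eqn: base-r recursion} guarantees by induction on $n$, since that recursion is an instance of the interlacing-preserving recursion of Lemma~\ref{lem: inversion sequence recursion} (with $\varphi(\ell) = \ell+1$ in the appropriate indexing), and the base case $n=1$ gives the constant sequence $f^{\langle r-1,\ell\rangle}_{(r,1)} = 1$ for all $\ell$, which is trivially interlacing.

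Now I would fix $n\geq 2$ and set $g_\ell := f^{\langle r-1,\ell\rangle}_{(r,n-1)}$ for $\ell\in[r-2]_0$, so that $(g_\ell)_{\ell=0}^{r-2}\in\mathcal{F}_{r-2}^+$ by the previous paragraph (for $n=2$ we use the constant-$1$ sequence). The formula of Theorem~\ref{thm: base-r box polynomial} can then be reorganized: the first summand $z\sum_{i=0}^{r-2}g_i$ is, up to the overall factor $z$, the polynomial $\sum_{i\leq r-2} g_i$, while the $\ell$-th term of the second sum is $z\left(\sum_{i=0}^{\ell-1}g_i + z\sum_{i=\ell}^{r-2}g_i\right) = z\,h_\ell$ where $h_\ell := \sum_{i<\ell}g_i + z\sum_{i\geq \ell}g_i$. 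By Lemma~\ref{lem: inversion sequence recursion} applied to $(g_\ell)$ with the nondecreasing map $\varphi(\ell)=\ell$, the sequence $(h_\ell)_{\ell=1}^{r-2}$ is interlacing with nonnegative coefficients; moreover the first summand $\sum_i g_i$ is a nonnegative linear combination of an interlacing sequence, hence real-rooted, and it is straightforward to check it interlaces every $h_\ell$. Therefore $\ell^\ast(\mathcal{B}_{(r,n)};z) = z\left(\sum_i g_i + \sum_{\ell=1}^{r-2} h_\ell\right)$ is $z$ times a sum of members of a common interlacing family, and a sum of polynomials from an interlacing sequence (all with nonnegative coefficients) is again real-rooted by the standard fact underlying \cite{B15}. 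Multiplying by $z$ preserves real-rootedness, so $\ell^\ast(\mathcal{B}_{(r,n)};z)$ is real-rooted; unimodality then follows from \cite[Theorem 1.2.1]{B89} together with the symmetry of local $h^\ast$-polynomials noted in Section~\ref{sec: introduction}.

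The main obstacle I anticipate is organizational rather than deep: one must verify that \emph{the particular combination} of members of the interlacing family $(g_\ell)$ and their images $(h_\ell)$ appearing in Theorem~\ref{thm: base-r box polynomial} genuinely sits inside a single interlacing sequence, so that the ``sum of an interlacing sequence is real-rooted'' principle applies. Concretely one wants the family $\{\sum_i g_i\}\cup\{h_\ell : 1\le \ell\le r-2\}$ to be totally ordered under $\preceq$. This should follow by packaging it through one more application of the interlacing-preserving matrices of \cite{B15} — indeed, the identities \eqref{eqn: dimension subtraction} and \eqref{eqn: base-r recursion} show that $\ell^\ast(\mathcal{B}_{(r,n)};z)$ equals $h^\ast(\mathcal{B}_{(r,n)};z)-h^\ast(\mathcal{B}_{(r,n-1)};z)$, and since both of those $h^\ast$-polynomials are obtained from the same interlacing sequence by a nonnegative (interlacing-preserving) operation, their difference being real-rooted can be read off directly from the structure in the proof of \cite[Theorem~4.2]{S17}. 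So an alternative, and perhaps cleaner, route is to observe that Theorem~\ref{thm: base-r box polynomial} already exhibits $\ell^\ast(\mathcal{B}_{(r,n)};z)$ in precisely the normal form to which Lemma~\ref{lem: extended inversion sequence recursion} applies, and invoke that lemma directly on the interlacing sequence $\left(f^{\langle r-1,\ell\rangle}_{(r,n-1)}\right)_{\ell}$; this avoids handling the difference of two polynomials altogether and is the version I would write up.
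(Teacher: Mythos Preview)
Your approach is essentially the paper's: show that $\left(f^{\langle r-1,\ell\rangle}_{(r,n-1)}\right)_\ell$ is interlacing, apply one interlacing-preserving transformation, and recognize $\ell^\ast(\mathcal{B}_{(r,n)};z)$ as $z$ times the sum of the resulting interlacing sequence. The ``main obstacle'' you flag is not one: your polynomial $\sum_i g_i$ is simply $h_{r-1}$ (the case $\ell=r-1$, where the $z$-sum is empty), so all $r-1$ summands already lie in the single interlacing family produced by one application of Lemma~\ref{lem: inversion sequence recursion}---this is exactly how the paper packages it, writing $g_{\ell-1}:=\sum_{i<\ell}f^{\langle r-1,i\rangle}_{(r,n-1)}+z\sum_{i\geq \ell}f^{\langle r-1,i\rangle}_{(r,n-1)}$ for $1\le \ell\le r-1$ and noting that Lemma~\ref{lem: inversion sequence recursion} must be applied to the \emph{reversed} sequence $\bigl(f^{\langle r-1,r-2\rangle}_{(r,n-1)},\ldots,f^{\langle r-1,0\rangle}_{(r,n-1)}\bigr)$ because the factor $z$ sits on the large-index part rather than the small-index part.
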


\begin{proof}
To prove that $\ell^\ast(\mathcal{B}_{(r,n)};z)$ is real-rooted we again apply the theory of interlacing sequences and the interlacing-preserving recursions presented in Lemmas~\ref{lem: inversion sequence recursion} and ~\ref{lem: extended inversion sequence recursion}. 
By Lemma~\ref{lem: extended inversion sequence recursion}, we know that the recursion for the polynomials $f^{\langle r-1,\ell\rangle}_{(r,n)}$ given in equation~\eqref{eqn: base-r recursion} preserves interlacing.  
Since the polynomials 
\[
g_{\ell-1}:=\sum_{i=0}^{\ell-1}f^{\langle r-1,i\rangle}_{(r,n-1)}+z\sum_{i=\ell}^{r-2}f^{\langle r-1,i\rangle}_{(r,n-1)}
\]
for $1\leq \ell \leq r-1$ are produced by applying the recursion in Lemma~\ref{lem: inversion sequence recursion} once to the interlacing sequence
\[
\left(f^{\langle r-1,r-2\rangle}_{(r,n-1)},\ldots,f^{\langle r-1,1\rangle}_{(r,n-1)},f^{\langle r-1,0\rangle}_{(r,n-1)}\right),
\]
and since this recursion also preserves interlacing, then we know that the sequence of polynomials
$
\left(g_{r-2},\ldots,g_1,g_0\right)
$
is also an interlacing sequence.
Moreover, by Theorem~\ref{thm: base-r box polynomial} we know that 
\[
\ell^\ast(\mathcal{B}_{(r,n)};z) = z\sum_{\ell = 0}^{r-1} g_{\ell-1}.
\]
It follows that $\ell^\ast(\mathcal{B}_{(r,n)};z)$ is real-rooted (by, for example, Lemma 2.2 in~\cite{B06}). 
\end{proof}

Similar to the case of the local $h^\ast$-polynomials for the factoradic simplices $\Delta_n^!$, the real-rootedness of $\ell^\ast(\mathcal{B}_{(r,n)};z)$ immediately implies that it is also $\gamma$-nonnegative.
\begin{corollary}
\label{cor: base-r gamma-nonnegativity}
For all $r\geq 2$ and $n\geq 1$ the local $h^\ast$-polynomial $\ell^\ast(\mathcal{B}_{(r,n)};z)$ of the base-$r$ $n$-simplex is $\gamma$-nonnegative.  
\end{corollary}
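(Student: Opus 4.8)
The plan is to obtain this corollary in exactly the same fashion as Corollary~\ref{cor: factoradic gamma-nonnegative}, namely as a purely formal consequence of real-rootedness, symmetry, and coefficientwise nonnegativity, combined with the classical implication recorded in Remark~3.1 of~\cite{B15}: a polynomial in $\R[z]$ having only nonnegative coefficients in the standard monomial basis which is additionally symmetric and real-rooted is automatically $\gamma$-nonnegative. Thus the entire argument reduces to checking these three hypotheses for $\ell^\ast(\mathcal{B}_{(r,n)};z)$, and all three are already in hand.

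First I would record that $\ell^\ast(\mathcal{B}_{(r,n)};z)$ has only nonnegative integer coefficients. This is immediate from the definition, since $\ell^\ast(\mathcal{B}_{(r,n)};z)$ enumerates the lattice points of the open parallelepiped $\Pi^\circ_{(r,n)}$ graded by their last coordinate; alternatively it can be read straight off the closed formula in Theorem~\ref{thm: base-r box polynomial}, as each section $f^{\langle r-1,i\rangle}_{(r,n-1)}$ of $(1+z+\cdots+z^{r-1})^{n-1}$ has nonnegative coefficients and the formula assembles these only through sums and multiplication by $z$ and $z^2$. Next, symmetry: as observed in the discussion following Question~\ref{quest: box polynomials}, the local $h^\ast$-polynomial of any lattice $d$-simplex is symmetric with respect to $d+1$, so applying this to the $n$-simplex $\mathcal{B}_{(r,n)}$ shows that $\ell^\ast(\mathcal{B}_{(r,n)};z)$ is symmetric with respect to $n+1$. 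Finally, real-rootedness is precisely the content of Corollary~\ref{cor: base-r real-rootedness}.

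With the three facts assembled, the cited implication from~\cite{B15} delivers that $\ell^\ast(\mathcal{B}_{(r,n)};z)$ is $\gamma$-nonnegative, which is the assertion. I do not expect any genuine obstacle: the substantive work has already been done in Theorem~\ref{thm: base-r box polynomial} and Corollary~\ref{cor: base-r real-rootedness}, and the present statement is a one-line corollary of them. The only bookkeeping point worth stating explicitly is that the relevant symmetry center here is $n+1$, so the expansion is taken in the basis $\{z^i(z+1)^{(n+1)-2i}\}_{i=0}^{\lfloor (n+1)/2\rfloor}$; since $\ell^\ast(\mathcal{B}_{(r,n)};z)$ is supported in degrees $1,\dots,n$ and is palindromic there, it indeed has degree at most $n+1$ and matches the symmetry of this basis, so the hypotheses of the cited result are met without further adjustment.
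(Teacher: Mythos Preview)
Your proposal is correct and matches the paper's approach exactly: the corollary is stated without proof in the paper precisely because it follows, just as Corollary~\ref{cor: factoradic gamma-nonnegative} does, immediately from symmetry, nonnegative coefficients, and the real-rootedness established in Corollary~\ref{cor: base-r real-rootedness} via the implication in Remark~3.1 of~\cite{B15}. Your added remarks on the symmetry center and degree are accurate bookkeeping but not needed for the argument.
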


\begin{example}
[The Base-$2$ Simplex]
\label{ex: the base-2 simplex}
In the case when $r = 2$, the normalized volume of $\mathcal{B}_{(2,n)}$ is equal to $2^n$, the $n^{th}$ place value in the binary numeral system.  
In Theorem 3.6 of~\cite{S17} it is shown that the $h^\ast$-polynomial of $\mathcal{B}_{(2,n)}$ is a familiar symmetric, real-rooted and unimodal polynomial; namely, 
\[
h^\ast(\mathcal{B}_{(2,n)};z) = (1+z)^n.
\]
By applying Corollary~\ref{cor: base-r real-rootedness} we can similarly deduce that
\begin{equation}
\label{eqn: base-2}
\ell^\ast(\mathcal{B}_{(2,n)};z) = z(1+z)^{n-1}.
\end{equation}
In particular, when $r = 2$, we have that $f_{(2,n)} = (1+z)^n$, $r-2 = 0$, and
\[
f_{(2,n)}^{\langle 1,0\rangle} = (1+z)^n.
\]
Since $r-2 = 0$, the second sum in Theorem~\ref{thm: base-r box polynomial} is empty, which leaves us with the formula for $\ell^\ast(\mathcal{B}_{(2,n)};z)$ in equation~\eqref{eqn: base-2}.  

Finally, we remark that the formula for $\ell^\ast(\mathcal{B}_{(2,n)};z)$ given in equation~\eqref{eqn: base-2} also has a natural combinatorial interpretation. 
In Theorem 3.6 of~\cite{S17}, it is shown that 
\[
h^\ast(\mathcal{B}_{(2,n)};z) = \sum_{b=0}^{2^n-1}z^{\supp_2(b)},
\]
where $\supp_2(b)$ denotes the number of $1$'s in the binary representation of the integer $b$.  
Equation~\eqref{eqn: base-2} simply says that 
\[
\ell^\ast(\mathcal{B}_{(2,n)};z) = \sum_{0<b<2^n\,:\,\textrm{$b$ is odd}}z^{\supp_2(b)}.
\]
\end{example}

\section{Final Remarks}
\label{sec: final remarks}

We end with a few comments on the possible future directions of research relating to the results in this note. 
First, we make a remark on the general relationship between real-rootedness of $h^\ast$-polynomials of simplices and that of their local $h^\ast$-polynomials:
The results of~\cite{GS18} and those of this paper demonstrate that certain families of lattice simplices with real-rooted $h^\ast$-polynomials also admit real-rooted local $h^\ast$-polynomials.  
It is therefore natural to ask if real-rootedness of the $h^\ast$-polynomial implies real-rootedness of the local $h^\ast$-polynomial and/or vice versa.  
Using the observations in this paper, we can already show that the converse of this statement does not hold: 
By Corollary~\ref{cor: projective space}, we know that the $2$-simplex $\Delta_{(1,q)}$ with $q = (1,1)$ has a real-rooted local $h^\ast$-polynomial, but not a real-rooted $h^\ast$-polynomial.  
On the other hand, the author currently is not aware of a lattice simplex with a real-rooted $h^\ast$-polynomial but a non-real-rooted local $h^\ast$-polynomial.  
Examples of such simplices and an understanding of their lattice point combinatorics would be interesting to see.

As noted in Corollaries~\ref{cor: factoradic gamma-nonnegative} and~\ref{cor: base-r gamma-nonnegativity}, it follows immediately from the real-rootedness of the local $h^\ast$-polynomials studied in this note that they are also $\gamma$-nonnegative.  
The well-known technique of valley-hopping does not seem to offer a second proof of this fact (see Remark~\ref{rmk: factoradic gamma-nonnegativity} in the case of the factoradic simplex).  
Thus, it would be interesting to see a purely combinatorial proof of the $\gamma$-nonnegativity of the local $h^\ast$-polynomials studied in this paper, as well as those in~\cite{GS18}.  

It was also recently shown that the local $h^\ast$-polynomial of a simplex satisfies a monotonicity property with respect the local $h$-polynomials of its geometric subdivisions (see Remark 7.18 in~\cite{KS16}).  
In particular, if $\Delta$ is a lattice simplex with a lattice triangulation $T$, and $\ell_\Delta(T_\Delta;z)$ is the local $h$-polynomial of the (abstract) subdivision of $\Delta$ defined by $T$, then 
\[
\ell^\ast(\Delta;z)\geq \ell_\Delta(T_\Delta;z).
\]
Thus, it would be interesting to know if the local $h^\ast$-polynomials computed here can be realized as local $h$-polynomials for some (possibly only abstract) subdivision of a simplex.  
One way to accomplish this would be to prove that the simplices studied here all admit unimodular triangulations (see Example 7.19 in~\cite{KS16}).  

\smallskip

\noindent
{\bf Acknowledgements}. 
The author was supported by an NSF Mathematical Sciences Postdoctoral Research Fellowship (DMS - 1606407). 
This manuscript was prepared as a contribution to the conference proceedings of the \emph{$2018$ Workshop on Lattice Polytopes} at Osaka University.  
The author thanks the organizers, Dr. Takayuki Hibi and Dr. Akiyoshi Tsuchiya, for their support during this workshop.

%
%
\end{document}